\documentclass{article}
\usepackage[utf8]{inputenc}
\usepackage{hyperref}
\usepackage{colortbl}
\usepackage[margin=1.25in]{geometry}
\usepackage[normalem]{ulem}
\usepackage{lscape}
\usepackage{wrapfig}
\usepackage{booktabs}
\usepackage{bbm}
\usepackage{xcolor}
\usepackage{graphicx}
\usepackage{amssymb}
\usepackage{amsbsy}
\usepackage{amsmath}
\usepackage{amsthm}
\usepackage{amsfonts}
\usepackage{enumitem}
\usepackage{subcaption}
\usepackage{caption}
\usepackage{listings}
\usepackage{authblk}
\usepackage[]{natbib}   
\bibliographystyle{plainnat}

\usepackage{algorithm}
\usepackage{tabularx}
\usepackage{algpseudocode}
\usepackage{accents}
\usepackage{multicol}
\usepackage{lipsum} 
\usepackage[normalem]{ulem}

\makeatletter
\newcommand{\multiline}[1]{%
  \begin{tabularx}{\dimexpr\linewidth-\ALG@thistlm}[t]{@{}X@{}}
    #1
  \end{tabularx}
}
\makeatother

\newcommand{\ignore}[1]{}

\hypersetup{
    colorlinks=true,
    linkcolor=blue,
    filecolor=magenta,      
    urlcolor=cyan,
}
\setlength{\parindent}{0.0em}
\setlength{\parskip}{1.00em}
\newtheorem{example}{Example}
\newtheorem{proposition}{Proposition}

\newtheorem{corollary}{Corollary}

\newcommand{\la}[0]{\leftarrow}

\newcommand{\br}[1]{\{ {#1} \}}
\newcommand{\mc}[1]{\mathcal{#1}}
\newcommand{\om}[0]{\omega}
\newcommand{\ti}[1]{\tilde{#1}}

\newcommand{\pr}[1]{\{ #1 \}}

\newcommand{\tcb}[1]{{#1}}

\title{A computational study of cutting-plane methods for multi-stage stochastic integer programs}
\date{\today}
\author{Akul Bansal}
\author{Simge K\"u\c{c}\"ukyavuz}
\affil{Industrial Engineering and Management Sciences \\ Northwestern University \\
{\texttt{akul@u.northwestern.edu, simge@northwestern.edu}}}

\begin{document}

\maketitle

\begin{abstract}
We report a computational study of cutting plane algorithms for multi-stage stochastic mixed-integer programming models with the following cuts: (i) Benders', (ii) Integer L-shaped, and (iii) Lagrangian cuts. We first show that Integer L-shaped cuts correspond to one of the optimal solutions of the Lagrangian dual problem, and, therefore, belong to the class of Lagrangian cuts. To efficiently generate these cuts, we present an enhancement strategy to reduce time-consuming exact evaluations of integer subproblems by alternating between cuts derived from the relaxed and exact computation. Exact evaluations are only employed when Benders' cut from the relaxation fails to cut off the incumbent solution. Our preliminary computational results show the merit of this approach on multiple classes of real-world problems.
\end{abstract}

\section{Introduction}
\tcb{We consider a multi-stage stochastic integer programming (MSIP) model which is a framework for sequential decision-making under uncertainty when decision variables have integer restrictions.} Specifically, MSIP models the stochastic version of the following deterministic problem with $T$ stages:
\begin{equation*}
\min _{\left(x_1, y_1\right), \ldots,\left(x_T, y_T\right)}\left\{\sum_{t=1}^T f_t\left(x_t, y_t\right):\left(x_{t-1}, x_t, y_t\right) \in X_t, \forall t=1, \ldots, T\right\} .
\end{equation*}
The formulation has a special structure where the $h_t$-dimensional state variable vector $x_t \in \mathbb{R}^{h_t}$ links successive stages, and the local \tcb{variable vector} $y_t$ is only part of the subproblem at \tcb{stage $t = 1, \ldots, T$. The objective function in stage $t$ is denoted by $f_t$ and the set of constraints is denoted by $X_t$, which include integrality restrictions on some or all variables}. The data required in stage $t$ for the objective function $f_t$ and constraints in $X_t$ are denoted as $\omega_t := (f_t, X_t)$.
When data $(\omega_2, \ldots, \omega_T)$ are uncertain it is viewed as a stochastic process, denoted as $(\ti{\omega}_2, \ldots, \ti{\omega}_T)$. A practical approach in this case is to model the stochastic process with a finite number of realizations in the form of a scenario tree. Let $\mathcal{T}$ be the scenario tree we associate with the underlying stochastic process, and let $S_t$ denote the set of nodes in stage $t$. Each node $n$ in stage $t$ represents a unique realization $\omega_t^n$ of the random variable $\ti{\omega}_t$. The parent node in stage $t-1$ is denoted by $a(n)$ and children nodes of node $n$ are denoted by $\mathcal{C}(n)$. The conditional probability of transitioning from node $n$ to node $m$ is given as $q_{nm}$. \tcb{The MSIP can then be formulated using the following dynamic programming (DP) recursions}:
\begin{equation} \label{prob:msp}
\min _{x_1, y_1}\left\{f_1\left(x_1, y_1\right)+\sum_{m \in \mathcal{C}(1)} q_{1 m} Q_m\left(x_1, \om_2^m \right):\left(x_{a(1)}, x_1, y_1\right) \in X_1\right\},
\end{equation}
where for each node $n \in S_t$ and $t \in \mathcal{T} \backslash \{1\}$,
\begin{equation} \label{prob:Qval}
Q_n\left(x_{a(n)}, {\om}_t^n \right)=\min _{x_n, y_n}\left\{f_n\left(x_n, y_n\right)+\sum_{m \in \mathcal{C}(n)} q_{n m} Q_m\left(x_n, {\om}_t^m\right):\left(x_{a(n)}, x_n, y_n\right) \in X_n\right\}.
\end{equation}
For a given $x_{a(n)}$, the function $Q_n(\cdot)$ is often referred as \tcb{the} optimal value function at node $n$, and $F_n(\cdot) := \sum_{m \in \mathcal{C}(n)} q_{n m} Q_m\left(\cdot\right)$ is referred to as the expected cost-to-go function. \tcb{Note that since node $n$ corresponds to unique scenario realization $\om_t^n$, the dependence on $\om_t^n$ in the right hand side of \eqref{prob:Qval} is captured only through subscript $n$.} In our computational study, we focus on MSIP problems where the objective function $f_n$ is linear, and the constraint system $X_n$ has linear constraints of the form $B_n x_{a(n)} + A_n x_n + C_n y_n \geq b_n$ along with integrality restrictions on a subset of the variables. \tcb{These assumptions typically make the optimal value function $Q_n(\cdot)$ non-linear and non-convex (\citealt{blair1982value})}.

The focus of this paper is on designing efficient decomposition methods for solving MSIP problems. We begin with discussing prior work in this direction. 
The multi-stage stochastic program \eqref{prob:msp} can be solved as \tcb{a large-scale} integer program using the extensive form
\begin{equation} \label{eq:ext_form}
\min _{x_n, y_n}\left\{\sum_{n \in \mathcal{T}} p_n f_n\left(x_n, y_n\right):\left(x_{a(n)}, x_n, y_n\right) \in X_n, \forall n \in \mathcal{T}\right\},
\end{equation}
where $p_n$ is the probability of realization of the unique data sequence from the root node to \tcb{node $n \in S_t$ for some $t \in \mathcal{T}$}. While the above formulation makes the problem deterministic, the size of the problem becomes exponential as the size of the scenario tree grows with stages and scenarios. {This approach is, therefore, not viable for stochastic programs with large scenario trees.} \tcb{To circumvent this difficulty, problem \eqref{prob:msp} is addressed using decomposition algorithms in practice.} {One way to decompose the problem is to do so at the level of individual nodes of the scenario tree. For a node $n$, this is achieved by approximating the future cost $F_n (\cdot)$ with variable $\theta_n$ and cuts of the form $\theta_n \geq h(x_n)$, where $h$ is usually an affine function of the state vector $x_n$.} The cuts provide a lower bound on the future cost $F_n$ and are updated iteratively by traversing the entire scenario tree in the Nested Benders' algorithm (\citealt{birge1985decomposition}). While this algorithm allows the decoupling of the extensive form into subproblems of smaller size, the number of subproblems solved in each iteration depends on the size of the scenario tree. A computationally more practical approach is to use stochastic dual dynamic integer programming (SDDiP) which improves the approximation of the future cost along a subset of scenario paths sampled in each iteration (\citealt{pereira1991multi, zou2019stochastic}). Specifically, in iteration $i$ of the algorithm the approximation for the subproblem with value function $Q_n(\cdot)$ is given by:
\begin{align} \label{prob:node_sub}
\left(P_n^i\left(x_{a(n)}^i, \psi_n^i, \om_t^n \right)\right): \underline{Q}_n^i\left(x_{a(n)}^i, \psi_n^i, \om_t^n \right):= \min _{x_n, y_n, z_n} &f_n\left(x_n, y_n\right)+\psi_n^i\left(x_n\right)  \\
\text { s.t.}& \left(z_n, x_n, y_n\right) \in X_n, \nonumber \\
& z_n = x_{a(n)}^i, \label{sub:copy}
\end{align}
where the approximate expected cost-to-go function is determined as:
\begin{align} \label{cut:general}
\psi_n^i\left(x_n\right):=\min \left\{ \theta_n: \theta_n \geq \sum_{m \in \mathcal{C}(n)} q_{n m} \cdot\left(v_m^{\ell}+\left(\pi_m^{\ell}\right)^{\top} x_n\right), \forall \ell=1, \ldots, i-1 \right\}.
\end{align}
The notation $\left(P_n^i\left(x_{a(n)}^i, \psi_n^i, \om_t^n \right)\right)$ denotes the subproblem at node $n$ in iteration $i$, with incumbent solution $x_{a(n)}^i$ and lower bound on expected cost-to-go function given by $\psi_n^i$. As before, the dependence on the unique scenario realization $\om_t^n$ is captured only through subscript $n$. Variable $z_n$ captures the incumbent state solution from the previous stage. The approximation $\psi_n^i (\cdot)$ in \eqref{cut:general} can more generally be a non-linear function such as the step function (\citealt{philpott2020midas}). The objective function of the subproblem is denoted as $\underline{Q}_n^i\left(x_{a(n)}^i, \psi_n^i, \om_t^n \right)$ which is used to determine the approximate expected cost-to-go function $\underline{F}_n^i (x_n^i) := \sum_{m \in \mathcal{C}(n)} q_{n m} \underline{Q}_m\left(x_n^i, \psi_m^i, \om_t^m \right)$. The initial approximation $\psi_n^1$ is taken to be a trivial lower bound on $F_n(\cdot)$. In iteration $i$, the lower bound $\psi_n^i$ can be improved by evaluating $\underline{F}_n^{i}(x_n^i)$ exactly as an integer program or by solving its relaxation. If $\psi_n^i$ is less than the evaluation, a cut of the form \eqref{cut:general} is added.

\tcb{A breadth of work in solving MSIP problems focuses on deriving cuts that provide a convex relaxation of cost-to-go functions. For example, Benders' cuts (\citealt{benders1962partitioning}) are obtained from linear programming (LP) relaxation. They are sufficient to provide convergence for solving MSIP problems with continuous recourse in every stage, i.e., when decision variables from the second stage onwards $\{x_t, y_t\}_{t \geq 2}$ are real-valued. Several methods based on convexifying the subproblems have been proposed in context of two-stage stochastic programs. These include disjunctive programming used in \citealt{sen2005c, sen2006decomposition, qi2017ancestral}, Fenchel cuts (\citealt{ntaimo2013fenchel}), and scaled cuts by \citealt{van2020converging} which integrate previously generated Lagrangian cuts into a cut-generating non-linear program. \citealt{gade2014decomposition} and  \citealt{zhang2014finitely} utilize Gomory cuts to obtain iteratively tighter approximations of the pure-integer second-stage subproblems. These methods do not tend to scale well for multi-stage problems. We refer the reader to \citealt{KS2017} for a review of methods based on convexifications of second-stage problems.

When the recourse variables are integer-restricted, Benders' cuts tend to be weak and the iterative procedure only converges to a lower bound. Several papers, therefore, derive cuts based on Lagrangian relaxation which provides better bounds than Benders' cuts (\citealt{gjelsvik1999algorithm, thome2013non, steeger2015strategic}). Further, in the case of binary state variables, convergence can be ensured with Integer L-shaped cuts (\citealt{laporte1993integer}) or Lagrangian cuts (\citealt{caroe1999dual, zou2019stochastic}). These cuts are derived from the exact evaluation of integer subproblems and are computationally expensive to compute than the Benders' cuts. The current literature aims to combine the two kinds of cuts to take advantage of their computational complexity and convergence properties in accelerating the decomposition algorithms.  \citealt{rahmaniani2020benders}, for instance, discuss a three-phase strategy for generating cuts in their Nested Benders implementation for two-stage problems. {They propose strengthened Benders' cuts (to be defined in Section \ref{sec:cut_fam}) which are preceded by Benders' cuts in the first phase and then followed by the Lagrangian cuts in the third phase.}  With each phase, the strength of the cut and the time taken to derive the cut increases. Applying Benders' cuts initially is a computationally less-intensive approach to eliminate incumbent solutions that are less likely to be optimal. \citealt{zou2019stochastic} perform experiments in which more than one cut is added at a node in a single iteration. For instance, one of the combinations they consider adds a strengthened Benders’ cut, an Integer L-shaped, and a Lagrangian cut at the same time. With this approach, the authors report improvement in the solution time of the algorithm for one class of problems they tested. Note that deriving more than one cut in each iteration may improve the bound obtained through $\psi_n^i$, but also increases the computational burden. It is not clear from the experiments in \citealt{zou2019stochastic} if combined cuts are advantageous for other classes of problems than the one considered. For two-stage stochastic integer programs, \citealt{angulo2016improving} propose to avoid the time-consuming exact evaluation of the second-stage subproblems by alternating between a Benders cut derived from the linear relaxation and an Integer L-shaped cut derived from the integer subproblem. Specifically, cuts based on exact evaluations are only added when the Benders' cut obtained from the relaxation fails to cut off the incumbent solution. This approach is different from \citealt{rahmaniani2020benders} which utilizes only one kind of cut in each phase. This means that Benders cuts are not utilized in later stages while they could have helped cut off incumbent solutions at a computationally cheaper price.} 

\tcb{\citealt{chen2022generating} investigate new techniques for generating Lagrangian cuts in a two-stage setting. Instead of solving the Lagrangian dual problem over all Lagrange multipliers, they propose to solve it over a restricted space. The restricted space is taken to be the span of coefficients of Benders cuts employed in the last few iterations and identified through a mixed-integer programming (MIP) approximation. The key idea is that the proposed separation problem should require fewer evaluations of the exact subproblem. {This is further reinforced by solving the separation problem sub-optimally with higher gap tolerance}. The authors show that their proposed approach helps close the gap at the root node of the master problem. {In our computational experiments (Appendix \ref{sec:chen}), we discuss how the authors' cut-generating method for Lagrangian cuts compares with our proposed approach.}} 

\tcb{To deal with the computational challenge of solving MSIP problems, this paper focuses on enhancing the cut-generating procedure for decomposition methods. Specifically, we aim to extend the alternating cut procedure \citealt{angulo2016improving} to the multi-stage setting. The main contributions of our work are summarized below:
\begin{enumerate}
    \item We present a variant of the SDDiP algorithm that utilizes the alternating cut procedure to solve MSIP problems. While the original paper \citealt{angulo2016improving} focuses on alternating between Benders' cuts and Integer L-shaped cuts, we consider alternating between any two cuts. {Specifically, we alternate between a valid cut, which accelerates the cut-generating procedure, and a tight cut, which ensures convergence.}
    \item {We show that Integer L-shaped cuts belong to the family of Lagrangian cuts. The Lagrangian dual problem obtained by relaxing the non-anticipativity constraints \eqref{sub:copy} might admit multiple optimal solutions. Integer L-shaped cuts correspond to one of the optimal solutions. Further, we discuss how different optimal solutions result in cuts of varying strengths.}
    
    \item We conduct an extensive computational experiment on two classes of MSIP problems.  {We compare how the alternating cut procedure performs with different choices of cuts, namely, the Integer L-shaped cut and the Lagrangian cut obtained by solving the dual problem using a subgradient algorithm.} We find that this approach has the potential to outperform the default approach of adding a single cut {from the same family} at each node in each iteration. 
\end{enumerate}}

The remainder of this paper is organized as follows. In Section \ref{sec:sddip}, we review the stochastic dual dynamic integer programming algorithm (SDDiP). We discuss various families of cuts and prove that Integer L-shaped cuts belong to the class of Lagrangian cuts. In Section \ref{sec:alt}, we discuss the proposed approach for efficient cut generation. Our computational experiments are presented in Section \ref{sec:comp}. Finally, we conclude in Section \ref{sec:concl}.

\section{Stochastic dual dynamic integer programming} \label{sec:sddip}

We now review the SDDiP algorithm proposed in \citealt{zou2019stochastic} under two assumptions. First, we assume stochasticity satisfies stage-wise independence. This means that children nodes in stage $t + 1$ are the same, and have the same conditional probabilities irrespective of the parent node in stage $t$. Owing to this, we can maintain a single value function $Q_t (\cdot)$ per stage instead of having value functions $Q_n(\cdot)$ at each node $n \in \mathcal{T} \backslash \{1\}$. Second, we assume that the MSIP problem has relatively complete recourse, which means for any incoming state solution from the previous stage, the subproblem in the current stage is feasible. This assumption can always be achieved by adding auxiliary continuous variables and penalizing them in the objective function.

To describe the algorithm with stage-wise independence we re-define some of the notation. {We adopt the same notation as in \citealt{zou2019stochastic}.} Let $N_t$ be the number of realizations of uncertain parameters at stage $t$. Irrespective of the state of the system in stage $t-1$, the possible uncertainty realizations in stage $t$ remain the same. We, therefore, denote the uncertainty realization in stage $t$ with index $j$ as $\om^j_t$.  The notation for the conditional probability of transitioning from any node in stage $t-1$ to a node $j \in \{1, \ldots, N_t\}$ in stage $t$, is simplified to $q_t^j$. The total number of scenarios is $N = \prod_{t = 1}^{T} N_t$. The value function $Q_t(\cdot)$ is parameterized by the solution $x_{t-1}$ from the previous stage and the uncertainty realization $\om^j_t$ in stage $t$. The approximate expected cost-to-go function in stage $t$ and before iteration $i$ is denoted as $\psi_{t}^{i}$. The SDDiP algorithm refines the value function along a subset of scenario paths sampled from the scenario tree. Let $k$ denote one of the $M$ scenario paths sampled. With a slight abuse of notation, we also use $\omega_{t}^{k}$ to denote the uncertainty realization in stage $t$ and the sample $k \in \br{1, \ldots, M}$. {Further, let $P_t^i\left(x_{t-1}^{ik}, \psi_t^i, \omega_t^k\right)$ denote the forward problem in iteration $i$ and at stage $t$ given the state solution $x_{t-1}^{ik}$ from the previous stage, approximation function $\psi_t^i$ of the future cost, and $\omega_t^k$ as the uncertainty realization. The definition of subproblem $P_t^i\left(x_{t-1}^{ik}, \psi_t^i, \omega_t^k\right)$ remains same as \eqref{prob:node_sub} with subscript $n$ replaced with $t$ and $w_t^n$ replaced with $w_t^k$.} In the backward step, a suitable relaxation $R_t^{ij}$ of the updated problem $P_t^i\left(x_{t-1}^{ik}, \psi_t^{i+1}, \omega_t^{j}\right)$ for some $1 \leq j \leq N_t$, is solved to obtain the cut coefficients $v_{t}^{ij}, \pi_{t}^{ij}$.
The cut coefficients for each uncertainty realization $\omega_t^{j}$, are then aggregated as follows to obtain the final cut:
\begin{align} \label{cut:form}
    \theta_{t-1} \geq \sum_{j=1}^{N_t} q_t^j \left(v^{ij}_{t} + (\pi^{ij}_{t})^{\top} x_{t-1} \right).
\end{align}
The SDDiP algorithm is described in Algorithm \ref{algo:sddip}. In each iteration $i$ of the algorithm, we perform three key steps: the sampling step (Line 3), the forward step (Lines 4-10), and the backward step (Lines 14-22). In the sampling step, we sample $M$ scenarios with replacement, that is $M$ paths from the root to the leaf node of the scenario tree. Next, in the forward step, for each scenario $k = 1, \ldots, M$ we traverse through the path $\pr{\om^{k}_{t}}_t$ by solving the approximate subproblem $P_{t}^{i}(x_{t-1}^{ik}, \psi^{i}_{t}, \omega_{t}^{k})$. The subproblem is parameterized by the incoming state variable $x_{t-1}^{ik}$ from the previous stage, current approximation $\psi^{i}_{t}$ of the value function, and the uncertainty realization $\omega_{t}^{k}$. The solution of the state variable $x_t^{ik}$ is then passed to the next stage problem. An upper bound (UB) cannot be obtained in the SDDiP algorithm because we sample only a subset of scenario paths. The upper bound update is therefore an estimate taken to be the right end of the confidence interval based on $M$ sampled paths. This is computed in Lines 11-13. When all sampled scenario paths have been traversed in the forward path, an update of the approximate cost-to-go function $\psi_{t}^{i}$ is obtained in the backward step. Unlike the forward step, the update is attained by solving all $N_{t+1}$ updated subproblems at stage $t+1$, and not just the sampled ones. Finally, a lower bound (LB) is computed in Lines 23-25, by solving the problem in the first stage.

The SDDiP algorithm differs from the more classical Nested Benders (NB) algorithm in three ways. {First, NB traverses all scenario paths instead of considering only a small subset of $M$ paths.} Owing to this, the second difference arises in the upper bounds attained in NB which are deterministic and not an estimate from a random sample. Third, the convergence criterion in NB is taken to be the gap between the lower and upper bound attained. This is not possible in SDDiP because the upper bounds attained here are only estimates. \cite{zou2019stochastic}[Theorem 1] proves almost sure convergence of the SDDiP algorithm to the optimal solution of MSIP. For finite termination of the SDDiP algorithm we discuss a suitable convergence criterion in Section \ref{sec:cc}. Next, we review various types of cuts that are used in the SDDiP algorithm.

\begin{algorithm}[t]
	\caption{Stochastic Dual Dynamic Integer Programming} 
    \label{algo:sddip}
	\begin{algorithmic}[1]
        \State Initialization: $i \leftarrow 1$, $LB \la - \infty$, $UB \la + \infty$, lower bounds $\pr{L_t}$, initial approximation $\pr{\psi_{t}^{1}(\cdot) = \pr{\theta_t: \theta_t \geq L_t}}$ and starting state variable $x_0$.
        
		\While {\text{stopping criterion is not satisfied}   }
                \State Sample $M$ scenarios $\Omega^{i} = \pr{\om_{1}^{k}, \ldots, \om_{T}^{k}}_{k = 1, \ldots, M}$

                \State /* \textbf{Forward step} */
                \For {$k = 1 \ldots M$}
                    \For {$t = 1 \ldots T$}
                        \State \multiline{solve forward problem $P_{t}^{i} (x^{ik}_{t-1}, \psi^{i}_{t}, \om_{t}^k)$ and collect solution $(x^{ik}_{t}, y^{ik}_{t}, z_{t}^{ik}, \theta_{t}^{ik} = \psi_{t}^{i}(x_t^{ik}))$.}
                    \EndFor  
                    \State $u^{k} \la \sum_{t = 1, \ldots, T} f_t(x_t^{ik}, y_t^{ik}, \om_t^k)$
                \EndFor
                \item[]
                \State /*  \textbf{Statistical upper bound update} */
                \State $\hat{\mu} \la \frac{1}{M} \sum_{k=1}^{M} u^{k}$ and ${\hat{\sigma}}^2 \la \frac{1}{M-1} \sum_{k = 1}^{M} (u^k - \hat{\mu})^2$
                \State $UB \la \hat{\mu} + z_{\alpha/2} \frac{\hat{\sigma}}{\sqrt{M}}$.
                \item[]
                \State /* \textbf{Backward Step} */
                \For {$t = T, \ldots, 2$}
                    \For{$k = 1, \ldots, M$}
                        \For {$j = 1, \ldots, N_t$}
                                \State \multiline{%
                                                Solve a suitable relaxation $R_t^{ij}$ of the updated problem $P_{t}^{i}(x_{t-1}^{ik}, \psi_t^{i+1}, \om_t^{j})$ and collect coefficients $(v_t^{ij}, \pi_{t}^{ij})$.}
                        \EndFor
                        \State \multiline{Add optimality cut using the coefficients $\br{(v_t^{ij}, \pi_t^{ij})}_{j = 1, \ldots, N_t}$ to $\psi_{t-1}^i$ to get $\psi_{t-1}^{i+1}$.}
                \EndFor
            \EndFor
            \item[]
            
            \State /*  \textbf{Lower bound update} */
            \State \multiline{Obtain the lower bound LB by solving $P^{i}({x}_0, \psi_{1}^{i+1})$.}
            \State $i \la i + 1$
		\EndWhile
	\end{algorithmic} 
\end{algorithm}

\subsection{Cut families} \label{sec:cut_fam}
The central theme of decomposition algorithms is to decompose the problem into subproblems at each node by using approximations for the expected cost-to-go functions. Relaxations of individual subproblems are then solved to update these approximations using linear hyperplanes or cuts. The SDDiP algorithm converges if the cuts are valid, tight, and finite. \tcb{The cut $\theta_t \geq v^{ij}_{t} + (\pi^{ij}_{t})^{\top} x_{t-1}$ obtained in the backward step by solving the relaxation $R_t^{ij}$ of the subproblem $P_t^{i}(x_{t-1}^{ik}, \psi_{t}^{i}, \omega_{t}^{j})$, is said to be \emph{tight} if the cut} coincides with the updated approximate value function $\underline{Q}_t^i\left(\cdot, \psi_t^{i+1}, \om_{t}^{j} \right)$ at the forward step solution $x_{t-1}^{ik}$, that is,
\begin{equation*} 
    \underline{Q}_t^i\left(x_{t-1}^{ik}, \psi_t^{i+1}, \om_{t}^{j} \right) = v^{ij}_{t} + (\pi^{ij}_{t})^{\top} x_{t-1}^{ik}.
\end{equation*}

Note that the tightness property does not require the cuts to be tight for the true value function $Q_t(x_{t-1}, \om_{t}^{j})$, but tight for the approximate value function $\underline{Q}_t^i\left(x_{t-1}^{ik}, \psi_t^{i+1}, \om_{t}^{j} \right)$. \tcb{When the linear approximation $v^{ij}_{t} + (\pi^{ij}_{t})^{\top} x_{t-1}^{ik}$ only provides a lower bound on the updated value function $\underline{Q}_t^i\left(\cdot, \psi_t^{i+1}, \om_{t}^{j} \right)$, the cut is only considered \emph{valid}. Specifically, in this case we have:
\begin{equation*} 
    \underline{Q}_t^i\left(x_{t-1}^{ik}, \psi_t^{i+1}, \om_{t}^{j} \right) \geq v^{ij}_{t} + (\pi^{ij}_{t})^{\top} x_{t-1}^{ik}.
\end{equation*}
The \emph{finiteness} property requires that in each iteration $i$, the backward step can only generate finitely many distinct cut coefficients $(v^{ij}_{t}, \pi^{ij}_{t})$.} We now briefly review four classes of cuts that are commonly used. 

\textbf{Benders' cut:} The first collection of cuts used in stochastic programming are Benders' cut (\citealt{benders1962partitioning}). For the subproblem at stage $t-1$, the cuts are obtained from an optimal dual solution of the LP relaxation of updated subproblems $P_t^{i}(x_{t-1}^{ik}, \psi_{t}^{i+1}, \omega_{t}^{j})$ for $1 \leq j \leq N_t$. Specifically, let $Q^{i, LP}_{t} (x_{t-1}^{ik}, \psi_{t}^{i+1}, \omega_{t}^{j})$ denote the optimal objective value of the LP relaxation of $P_t^{i}(x_{t-1}^{ik}, \psi_{t}^{i+1}, \omega_{t}^{j})$. Further, let $\pi_{t}^{ij}$ denote the dual solution corresponding to constraints $z_t = x_{t-1}^{ik}$. Benders' cut is then given as follows:
\begin{align*}
    \theta_{t-1} \geq \sum_{j = 1}^{N_t} q_t^j Q^{i,LP}_{t} (x_{t-1}^{ik}, \psi_{t}^{i+1}, \omega_{t}^{j}) + \sum_{j = 1}^{N_t} q_{t}^{j} (\pi_j^i)^T (x_{t-1} - x_{t-1}^{ik}).
\end{align*}
For subproblems with integer restrictions, Benders' cut is only a valid (not necessarily tight) cut.

\textbf{Integer L-shaped cut:} \citealt{laporte1993integer} propose integer L-shaped cuts when the state variables are binary. Let $x_{tr}$ denote the $r^{th}$ component of the variable vector $x_t$.  Further, suppose that we know a lower bound $L(\psi_{t}^{i+1}, \om_t^j)$ on the subproblem $\underline{Q}_t^i(\cdot, \psi_{t}^{i+1}, \om_t^j)$ in stage $t$. Given incumbent solution $x_{t-1}^{ik}$ in iteration $i$ and along the sampled path $k$, the integer L-shaped cut is given as follows:

\begin{align}
    \theta_{t-1} \geq  \sum_{j} q_{t}^{j} \Biggr(
    &\left( \underline{Q}_t^i(x_{t-1}^{ik}, \psi_{t}^{i+1}, \om_t^j) - L(\psi_{t}^{i+1}, \om_t^j)\right) \left(\sum_{r: x_{t-1,r}^{ik} = 1} \left(x_{t-1,r} - 1\right) - \sum_{r: x_{t-1,r}^{ik} = 0} x_{t-1,r} \right) \nonumber\\
    &+ \underline{Q}_t^i(x_{t-1}^{ik}, \psi_{t}^{i+1}, \om_t^j) \Biggr). \label{cut:intL}
\end{align}
Note that when the variable vector $x_{t-1}$ is component-wise the same as $x_{t-1}^{ik}$, then we get 
$$\theta_{t-1} \geq \sum_{j=1}^{N_t} q_j \underline{Q}_t^i(x_{t-1}^{ik}, \psi_{t}^{i+1}, \om_t^j).$$ 
This allows to improve the approximation of the cost $\theta_{t-1}$ as $\sum_{j=1}^{N_t} q_j \underline{Q}_t^i(x_{t-1}^{ik}, \psi_{t}^{i+1}, \om_t^j)$ over the trivial lower bound $L(\psi_{t}^{i+1}, \om_t^j)$. However, when $x_{t-1}$ is not the same as $x_{t-1}^{ik}$, then the inequality \eqref{cut:intL} only provides a trivial lower bound on the future cost. For this reason, the derived cut tends to be tight at the current incumbent solution but is loose at other solutions. Unlike the Benders' cut, they are obtained by solving the subproblem exactly; hence they are valid and tight.

\textbf{Lagrangian cuts:} To obtain the Lagrangian cuts, consider the subproblem $P_t^{i}(x_{t-1}^{ik}, \psi_{t}^{i+1}, \om_t^{j})$:
\begin{subequations} \label{prob:sub+Copy_old}
\begin{align}
\underline{Q}_t^i(x_{t-1}^{ik}, \psi_{t}^{i+1}, \om_t^j) = \min_{x_t, y_t} \hspace{0.1cm} &f_t\left(x_t, y_t, \om_t^{j} \right)+\psi_t^{i+1}\left( x_t \right) \nonumber \\
\text { s.t. } &\hspace{0.1cm}z_t = x_{t-1}^{ik}, \label{cons:copy} \\
&\hspace{0.1cm}z_t \in \br{0,1}^{h_t} \label{cons:binary_restr}\\
&(z_t, x_t, y_t) \in X_t. \nonumber
\end{align}
\end{subequations}
The constraint \eqref{cons:binary_restr} is redundant in presence of non-anticipativity constraint \eqref{cons:copy}, but plays am important role in Proposition \ref{prop:intL}.
The Lagrangian cut is obtained by solving the Lagrangian dual of the reformulated problem where the non-anticipativity constraint is relaxed. Specifically, in the backward step in iteration $i$ at stage $t$, we solve the following problem:
\begin{align} \label{prob:ldual}
R_t^{ij} := \max_{\pi_{t}^{ij}} \left( \mc{L}_t^{i}(\pi_{t}^{ij}; \om_t^{j}) + (\pi_{t}^{ij})^{\top} x_{t-1}^{ik}  \right),
\end{align}
where the Lagrangian function $\mc{L}_t^{i}(\pi_{t}^{ij}; \om_t^{j})$ is given as follows:
\begin{subequations}\label{prob:lag_function}
\begin{align} 
\mc{L}_t^{i}(\pi_{t}^{ij}; \om_t^{j}) = \min_{x_t, y_t, z_t} \hspace{0.1cm} &f_t\left(x_t, y_t, \om_t^{j} \right)+\psi_t^{i+1}\left( x_t \right) - (\pi_{t}^{ij})^{\top} z_t \label{lagrn_obj_old}\\
\text { s.t. } &z_t \in \br{0,1}^h\\
&(z_t, x_t, y_t) \in X_t.
\end{align}
\end{subequations}

Clearly, for any value of $\pr{\pi_{t}^{ij}}_{j}$ the cut given by 
\begin{equation}\label{cuts:lagn}
    \theta_{t-1} \geq \sum_{j = 1}^{N_t} q_t^j \left( \mc{L}_t^{i}(\pi_{t}^{ij}; \om_t^{j}) + (\pi_{t}^{ij})^{\top} x_{t-1} \right), 
\end{equation}
is valid because the Lagrangian function solves a relaxed problem. This implies that the optimal $\pi_{t}^{ij}$ values attained in \eqref{prob:ldual} with $v_{t}^{ij} = \mc{L}_t^{i}(\pi_{t}^{ij}; \om_t^{j})$ also yield a valid cut. When the state variables are binary, \citealt{zou2019stochastic} prove that the cut obtained using $\pi$ values from \eqref{prob:ldual} is tight. This means that 
\begin{align} \label{tightLagrn}
\underline{Q}_t^i(x_{t-1}^{ik}, \psi_{t}^{i+1}, \om_t^j) = \max_{\pi_{t}^{ij}} \left( \mc{L}_t^{i}(\pi_{t}^{ij}; \om_t^{j}) + (\pi_{t}^{ij})^{\top} x_{t-1}^{ik}  \right).
\end{align}
when $\pi_t^{ij}$ is the optimal solution of \eqref{prob:ldual}.

\textbf{Strengthened Benders' cuts:} The strengthened Benders' cuts are of the form \eqref{cuts:lagn} where the $\pr{\pi_{t}^{ij}}_{j}$ are not obtained by solving the Lagrangian dual problem exactly but are instead optimal LP dual variables corresponding to the constraint $z_t = x_{t-1}^{ik}$. Strengthened Benders' cuts are known to be stronger than the standard Benders' cuts, but may not be tight.

We now show that Integer L-shaped cuts belong to the family of Lagrangian cuts. Specifically, consider the following feasible solution to the Lagrangian dual problem \eqref{prob:ldual}:
\begin{align} \label{intLpi}
   \pi^{ij}_{t,r} = \begin{cases}
        \underline{Q}_t^i(x_{t-1}^{ik}, \psi_{t}^{i+1}, \om_t^j) - L(\psi_{t}^{i+1}, \om_t^j), \quad r \in S(x_{t-1}^{ik}) \\
        L(\psi_{t}^{i+1}, \om_t^j) - \underline{Q}_t^i(x_{t-1}^{ik}, \psi_{t}^{i+1}, \om_t^j), \quad r \notin S(x_{t-1}^{ik}),
    \end{cases}
\end{align}
where the notation $v_r$ denotes the $r^{th}$ component of the vector $v$. The set $S(x_{t-1}^{ik})$ denotes the index of components of vector $x_{t-1}^{ik}$ which take value one. As before, the scalar $L(\psi_{t}^{i+1}, \om_t^j)$ denotes the lower bound on the expected future cost $ \underline{Q}_t^i(\cdot, \psi_{t}^{i+1}, \om_t^j)$. We now prove the following proposition:
\begin{proposition} \label{prop:intL}
    The vector $\pi$ defined in \eqref{intLpi} is optimal for \eqref{prob:ldual}.
\end{proposition}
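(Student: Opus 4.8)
The plan is to show that the Lagrangian dual objective in \eqref{prob:ldual} evaluated at the multiplier $\pi$ from \eqref{intLpi} equals $\underline{Q}_t^i(x_{t-1}^{ik},\psi_t^{i+1},\om_t^j)$. Since every Lagrangian cut \eqref{cuts:lagn} is valid, weak Lagrangian duality already gives $\mc{L}_t^i(\pi';\om_t^j)+(\pi')^\top x_{t-1}^{ik}\le \underline{Q}_t^i(x_{t-1}^{ik},\psi_t^{i+1},\om_t^j)$ for \emph{every} multiplier $\pi'$ (restrict the relaxed subproblem \eqref{prob:lag_function} to $z_t=x_{t-1}^{ik}$ to recover \eqref{prob:sub+Copy_old}). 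Hence it is enough to exhibit one multiplier that attains this upper bound: any such multiplier is automatically a maximizer of \eqref{prob:ldual}, and I will argue the $\pi$ of \eqref{intLpi} is one.

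To get the matching lower bound, abbreviate $Q^\ast:=\underline{Q}_t^i(x_{t-1}^{ik},\psi_t^{i+1},\om_t^j)$, $L:=L(\psi_t^{i+1},\om_t^j)$, $\hat x:=x_{t-1}^{ik}$, $S:=S(\hat x)$, and note $Q^\ast\ge L$ because $L$ lower-bounds $\underline{Q}_t^i(\cdot,\psi_t^{i+1},\om_t^j)$. I would take an arbitrary feasible point $(\bar z,\bar x,\bar y)$ of \eqref{prob:lag_function}. Because $\bar z$ is \emph{binary} -- which is precisely where the otherwise-redundant constraint \eqref{cons:binary_restr}, kept after dropping \eqref{cons:copy}, is used -- a short computation from the definition of $S(\cdot)$ gives $\pi^\top(\hat x-\bar z)=(Q^\ast-L)\,D$, where $D:=\sum_{r\in S}(1-\bar z_r)+\sum_{r\notin S}\bar z_r$ is a nonnegative integer, namely the number of coordinates in which $\bar z$ differs from $\hat x$. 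Now split into two cases. If $D=0$ then $\bar z=\hat x$, so $f_t(\bar x,\bar y,\om_t^j)+\psi_t^{i+1}(\bar x)\ge \underline{Q}_t^i(\hat x,\psi_t^{i+1},\om_t^j)=Q^\ast$ while $\pi^\top(\hat x-\bar z)=0$; if $D\ge 1$ then $f_t(\bar x,\bar y,\om_t^j)+\psi_t^{i+1}(\bar x)\ge \underline{Q}_t^i(\bar z,\psi_t^{i+1},\om_t^j)\ge L$ and $\pi^\top(\hat x-\bar z)=(Q^\ast-L)D\ge Q^\ast-L$. In both cases $f_t(\bar x,\bar y,\om_t^j)+\psi_t^{i+1}(\bar x)-\pi^\top\bar z+\pi^\top\hat x\ge Q^\ast$. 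Taking the minimum over all feasible $(\bar z,\bar x,\bar y)$ yields $\mc{L}_t^i(\pi;\om_t^j)+\pi^\top\hat x\ge Q^\ast$; together with the weak-duality upper bound this forces equality, so $\pi$ is optimal for \eqref{prob:ldual}.

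The conceptual content is light: this is essentially the classical validity argument of \citealt{laporte1993integer} for the integer L-shaped inequality, repackaged inside the Lagrangian relaxation. The one step needing care is the $D\ge 1$ case -- one must use the \emph{trivial} lower bound $L$ (not $Q^\ast$) on $f_t(\bar x,\bar y,\om_t^j)+\psi_t^{i+1}(\bar x)$ there, and exploit that integrality of $\bar z$ makes $D\ge 1$ a genuine integer gap, so a single unit of slack in $\pi^\top(\hat x-\bar z)$ already covers the difference $Q^\ast-L$. As a byproduct the two cases show the minimum defining $\mc{L}_t^i(\pi;\om_t^j)$ is attained at $\bar z=\hat x$, giving $\mc{L}_t^i(\pi;\om_t^j)=Q^\ast-|S|(Q^\ast-L)$; substituting this together with $\pi$ into \eqref{cuts:lagn} reproduces \eqref{cut:intL} coefficient-for-coefficient. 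This last remark is not needed for the proposition but explains why the statement is true and connects it to the discussion preceding it.
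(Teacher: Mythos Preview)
Your argument is correct and follows essentially the same route as the paper: both proofs show that at the multiplier $\pi$ of \eqref{intLpi} the Lagrangian dual objective attains the value $\underline{Q}_t^i(x_{t-1}^{ik},\psi_t^{i+1},\om_t^j)$ by a case split on whether the minimizing $z_t$ equals $x_{t-1}^{ik}$, using that any deviation in a binary coordinate already costs at least $Q^\ast-L$ in the penalty term while the remaining part of the objective can drop by at most $Q^\ast-L$.

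There is one small but genuine difference worth noting. To conclude optimality, the paper invokes the tightness result \eqref{tightLagrn} from \citet{zou2019stochastic}, i.e., it uses the externally established strong duality $\max_{\pi'}\{\mc{L}_t^i(\pi';\om_t^j)+(\pi')^\top x_{t-1}^{ik}\}=\underline{Q}_t^i(x_{t-1}^{ik},\psi_t^{i+1},\om_t^j)$ and then verifies that $\pi$ hits this value. You instead bound the dual objective from above by weak duality (restricting to $z_t=x_{t-1}^{ik}$) and from below by your case analysis, so the proof becomes self-contained and does not need \eqref{tightLagrn} as an input---indeed it re-derives strong duality at this particular $\pi$. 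Your Hamming-distance bookkeeping via $D$ is also tidier than the paper's single-coordinate case analysis, which as written only explicitly treats a one-coordinate flip.
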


\begin{proof}
    Let $(x_t^*, y_t^*, z_t^*)$ denote an optimal solution of the Lagrangian relaxation \eqref{prob:lag_function} with $\pi$ given as in \eqref{intLpi}. If $z_t^{*} = x_{t-1}^{ik}$, then the objective function in \eqref{prob:ldual} becomes:
    \begin{align} 
          &\mc{L}^{i}_{t}(\pi_t^{ij}, \om_t^j) + (\pi_t^{ij})^{\top} x_{t-1}^{ik} \nonumber \\
          =&f_t\left(x_t^*, y_t^*, \om_t^{j} \right)+\psi_t^{i+1}\left( x_t^* \right) - (\pi_t^{ij})^{\top} x_{t-1}^{ik} + (\pi_t^{ij})^{\top} x_{t-1}^{ik} \label{lagrn_relax_obj} \\
          =&f_t\left(x_t^*, y_t^*, \om_t^{j} \right)+\psi_t^{i+1}\left( x_t^* \right). \nonumber
    \end{align}
    This is exactly the optimal objective function value in \eqref{prob:sub+Copy_old}. The \textit{tightness} of the Lagrangian cut in \eqref{tightLagrn} then would imply that $\pi$ is optimal for \eqref{prob:ldual}. Therefore, it suffices to show that the proposed $\pi$ results in $z_t^{*} = x_{t-1}^{ik}$ in \eqref{prob:lag_function}. We show that for any $z_t$ not equal to $x_{t-1}^{ik}$ the objective function value in \eqref{prob:ldual} does not improve. In other words, the objective value either stays the same or worsens if we deviate from $z_t = x_t^{ik}$. For the feasible solution $z_t = x_{t-1}^{ik}$ and $\pi_t^{ij}$ as in \eqref{intLpi}, denote the objective function value obtained by optimizing \eqref{prob:lag_function} on remaining variables as $F_1 (x_t, y_t, w_t^j) + F_2(z_t = x_{t-1}^{ik})$, where $F_1$ captures the first two summands and $F_2$ captures the third summand in \eqref{lagrn_obj_old}. We refer to this objective value as the benchmark in the remaining proof. Now, consider the following cases, when $z_t$ is taken to be some other binary vector than $x_{t-1}^{ik}$.
    
    \textit{Case 1}: ${x_{t-1,p}^{ik}} = 1$ and $z_{t,p} = 0$ for some component $p$. The $F_2$ component in this case will increase by at least $\underline{Q}_t^i(x_{t-1}^{ik}, \psi_{t}^{i+1}, \om_t^j) - L(\psi_{t}^{i+1}, \om_t^j)$. However, the $F_1$ component may decrease. However, the decrease in $F_1$ component is at most $\underline{Q}_t^i(x_{t-1}^{ik}, \psi_{t}^{i+1}, \om_t^j) - L(\psi_{t}^{i+1}, \om_t^j)$. Such a choice of $z$, therefore, does not improve the objective against the benchmark.

    \textit{Case 2}: $x_{t-1,p}^{ik} =  0$ and $z_{t,p} = 1$ for some component $p$. Again, the $F_2$ component in this case will increase by at least $\underline{Q}_t^i(x_{t-1}^{ik}, \psi_{t}^{i+1}, \om_t^j) - L(\psi_{t}^{i+1}, \om_t^j)$. The $F_1$ component may decrease. However, the decrease in $F_1$ component is at most $\underline{Q}_t^i(x_{t-1}^{ik}, \psi_{t}^{i+1}, \om_t^j) - L(\psi_{t}^{i+1}, \om_t^j)$. Therefore, such a choice of $z$ also does not improve the objective against the benchmark.
    
    Either way we conclude that the proposed $\pi$ results in $z_t^{*} = x_{t-1}^{ik}$. Consequently, the vector $\pi$ defined in \eqref{intLpi} is optimal for \eqref{prob:ldual}.
\end{proof}

\begin{corollary}
    Integer L-shaped cuts \eqref{cut:intL} are one of the Lagrangian cuts \eqref{cuts:lagn}.
\end{corollary}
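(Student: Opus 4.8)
The plan is to show that the particular Lagrangian cut obtained by plugging the multiplier vector $\pi$ from \eqref{intLpi} into \eqref{cuts:lagn} is syntactically the Integer L-shaped cut \eqref{cut:intL}. By Proposition \ref{prop:intL} this $\pi$ is optimal for the Lagrangian dual \eqref{prob:ldual}, so together with $v_{t}^{ij} = \mc{L}_t^{i}(\pi_{t}^{ij}; \om_t^{j})$ it generates a genuine (valid and tight) Lagrangian cut of the form \eqref{cuts:lagn}; it then remains only to simplify the right-hand side and match it term by term with \eqref{cut:intL}.

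First I would reuse the fact established inside the proof of Proposition \ref{prop:intL} that, for $\pi$ as in \eqref{intLpi}, an optimal solution $(x_t^*, y_t^*, z_t^*)$ of the Lagrangian relaxation \eqref{prob:lag_function} satisfies $z_t^* = x_{t-1}^{ik}$. Substituting $z_t^* = x_{t-1}^{ik}$ into \eqref{lagrn_obj_old} and comparing with \eqref{prob:sub+Copy_old} yields $\mc{L}_t^{i}(\pi_{t}^{ij}; \om_t^{j}) = \underline{Q}_t^i(x_{t-1}^{ik}, \psi_{t}^{i+1}, \om_t^j) - (\pi_{t}^{ij})^{\top} x_{t-1}^{ik}$. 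Inserting this value of $v_{t}^{ij}$ into \eqref{cuts:lagn} collapses the $(\pi_{t}^{ij})^{\top} x_{t-1}^{ik}$ term against the $(\pi_{t}^{ij})^{\top} x_{t-1}$ term, leaving $\theta_{t-1} \geq \sum_{j} q_t^j \big( \underline{Q}_t^i(x_{t-1}^{ik}, \psi_{t}^{i+1}, \om_t^j) + (\pi_{t}^{ij})^{\top}(x_{t-1} - x_{t-1}^{ik}) \big)$.

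Next I would evaluate the inner product $(\pi_{t}^{ij})^{\top}(x_{t-1} - x_{t-1}^{ik})$ componentwise using the definition \eqref{intLpi}. For $r \in S(x_{t-1}^{ik})$ the coordinate of $x_{t-1} - x_{t-1}^{ik}$ is $x_{t-1,r} - 1$, multiplied by $\underline{Q}_t^i(x_{t-1}^{ik}, \psi_{t}^{i+1}, \om_t^j) - L(\psi_{t}^{i+1}, \om_t^j)$; for $r \notin S(x_{t-1}^{ik})$ the coordinate is $x_{t-1,r}$, multiplied by the negative of that same scalar. Factoring out $\underline{Q}_t^i(x_{t-1}^{ik}, \psi_{t}^{i+1}, \om_t^j) - L(\psi_{t}^{i+1}, \om_t^j)$ reproduces exactly the bracket $\sum_{r: x_{t-1,r}^{ik}=1}(x_{t-1,r}-1) - \sum_{r: x_{t-1,r}^{ik}=0} x_{t-1,r}$ of \eqref{cut:intL}, while the leftover summand $\sum_j q_t^j \underline{Q}_t^i(x_{t-1}^{ik}, \psi_{t}^{i+1}, \om_t^j)$ matches the trailing term. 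Hence the two cuts coincide, and since any choice of multipliers in \eqref{cuts:lagn} gives a Lagrangian cut, \eqref{cut:intL} is one of them.

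There is no deep obstacle once Proposition \ref{prop:intL} is available; the work is bookkeeping — tracking the sign flip in \eqref{intLpi} between $S(x_{t-1}^{ik})$ and its complement, and keeping the coordinate index $r$ of the state vector separate from the child-realization index $j$ during the aggregation. The only point a reader may want spelled out is the identity $v_{t}^{ij} = \mc{L}_t^{i}(\pi_{t}^{ij}; \om_t^{j}) = \underline{Q}_t^i(x_{t-1}^{ik}, \psi_{t}^{i+1}, \om_t^j) - (\pi_{t}^{ij})^{\top} x_{t-1}^{ik}$, which I would justify by citing the $z_t^* = x_{t-1}^{ik}$ conclusion of the preceding proof rather than re-deriving it.
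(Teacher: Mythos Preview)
Your proposal is correct and follows essentially the same approach as the paper: both plug the multiplier vector \eqref{intLpi} into \eqref{cuts:lagn}, use the identity $\mc{L}_t^{i}(\pi_{t}^{ij}; \om_t^{j}) = \underline{Q}_t^i(x_{t-1}^{ik}, \psi_{t}^{i+1}, \om_t^j) - (\pi_{t}^{ij})^{\top} x_{t-1}^{ik}$ (the paper invokes tightness \eqref{tightLagrn}, you cite the $z_t^* = x_{t-1}^{ik}$ conclusion of Proposition~\ref{prop:intL}; these are equivalent), and then expand the linear term componentwise to recover \eqref{cut:intL}. Your bookkeeping via $(\pi_{t}^{ij})^{\top}(x_{t-1} - x_{t-1}^{ik})$ is in fact slightly cleaner than the paper's, which splits the constant and linear pieces separately.
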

\begin{proof}
    We show that the coefficients defined in \eqref{intLpi} when substituted in \eqref{cuts:lagn}
    gives integer L-shaped cuts in \eqref{cut:intL}. 

Based on the tightness of the Lagrangian cut we have:
\begin{align}
    \mc{L}_t^{i}(\pi_{t}^{ij}; \om_t^{j}) &= \underline{Q}_t^i(x_{t-1}^{ik}, \psi_{t}^{i+1}, \om_t^j) - (\pi_{t}^{ij})^{\top} x_{t-1}^{ik} \nonumber \\
    &= \underline{Q}_t^i(x_{t-1}^{ik}, \psi_{t}^{i+1}, \om_t^j) - \left( \underline{Q}_t^i(x_{t-1}^{ik}, \psi_{t}^{i+1}, \om_t^j) - L(\psi_{t}^{i+1}, \om_t^j)\right) \left( \sum_{r: {x}_{t-1,r}^{ik} = 1} 1 \right) \label{lagrn_relax_obj2}
\end{align}
Next, the expression $(\pi_{t}^{ij})^{\top} x_{t-1}$ becomes
\begin{align}
    \left( \underline{Q}_t^i(x_{t-1}^{ik}, \psi_{t}^{i+1}, \om_t^j) - L(\psi_{t}^{i+1}, \om_t^j)\right) \left(\sum_{r: x_{t-1,r}^{ik} = 1} \left(x_{t-1,r} - 1\right) - \sum_{r: x_{t-1,r}^{ik} = 0} x_{t-1,r} \right). \label{coeff_lagrn}
\end{align}

Substituting \eqref{lagrn_relax_obj2} and \eqref{coeff_lagrn} in \eqref{cuts:lagn}, we obtain the integer L-shaped cut in \eqref{cut:intL}.
\end{proof}
We now adapt an example from \citealt{zou2019stochastic} to illustrate how Lagrangian cuts corresponding to two different optimal solutions of the Lagrangian dual problem result in cuts of varying strength.

\begin{example} 
    Consider the following two-stage stochastic integer program with one scenario,
    \begin{align*}
    \min_{x} \br{x_1 + x_2 + Q(x_1, x_2): x_1, x_2 \in \br{0,1}},
    \end{align*}
    where $Q(x_1, x_2) = \min \br{4y: y \geq 2.6 - 0.25 z_1 - 0.5 z_2, x_1 = z_1, x_2 = z_2, y \leq 4, y \in \mathbb{Z}_{+}, z_1, z_2 \in \br{0,1}}$. It is easy to see that $Q(x_1, x_2) \geq 8$ for any binary vector $(x_1, x_2)$. The incumbent solution is $(0,0)$ and $Q(0,0) = 12$. Now, both $(-4, -4)$ and $(0, -4)$ are optimal solutions to the Lagrangian dual problem. With $(-4, -4)$ as the Lagrange multipliers, we get the integer L-shaped cut $\theta \geq 12 - 4 x_1 - 4x_2$. With $(0, -4)$ as the Lagrange multipliers, we get a stronger cut $\theta \geq 12 - 4x_2$. Both cuts are tight but one is stronger than the other.
\end{example}

\subsection{Stopping criterion} \label{sec:cc}

The choice of the stopping criterion is important so that cut-generating strategies can be compared fairly. In our experiments, we use the stopping criterion of \citealt{homem2011sampling}, which views convergence from the perspective of testing the following hypothesis:
\begin{align*}
    H_0: UB \leq LB \text{ against } H_1: UB > LB.
\end{align*}
In hypothesis testing, a type I error occurs when $H_0$ is rejected even though it is true, and a type II error occurs when $H_0$ is retained even though it is false. In terms of the above hypothesis, type I error means that we continue with the iterations even though an optimal solution has been attained. Type II error means that we stop the algorithm even when there is a gap. \citealt{homem2011sampling} propose to control both the type-I and type-II errors with following pre-specified values: (i) bound $\alpha$ on probability of type-I error, (ii) bound $\gamma$ on probability of type-II error when $UB/LB > 1 + \delta$, and (iii) scalar $\delta \in [0,1]$ which can interpreted as the relative gap between deterministic lower and upper bounds. They show that this stopping criterion is more robust than some of the existing approaches (\citealt{pereira1991multi}). In our discussion of computational results, we refer to $\delta$ as pseudo-gap.

\section{Alternating cut criterion for solving MSIP problems}\label{sec:alt}

We now extend the alternating cut process of \citealt{angulo2016improving} for solving MSIP problems. The authors apply this technique to two-stage problems with integer L-shaped cuts. The approach can be naturally extended to multi-stage problems with any choice of tight cuts. Note that in backward step, the forward solution $\theta_{t-1}^{ik}$ always satisfies $\theta_{t-1}^{ik} \leq \sum_{j = 1}^{N_t} q_{j} \underline{Q}_{t}^{i}\left(x_{t-1}^{ik}, \psi_t^{i+1}, \omega_t^j \right)$. When $\theta_{t-1}^{ik} < \sum_{j = 1}^{N_t} q_{j} \underline{Q}_{t}^{i}\left(x_{t-1}^{ik}, \psi_t^{i+1}, \omega_t^j \right)$, then a tight cut of the form in \eqref{cut:form} is enforced to strengthen the subproblem, and cut off the incumbent solution. However, the evaluation of $\underline{Q}_{t}^{i}\left(x_{t-1}^{ik}, \psi_t^{i+1}, \omega_t^j \right)$ may require solving a computationally expensive integer program. Let ${Q}_{t}^{i, LP}\left(x_{t-1}^{ik}, \psi_t^{i+1}, \omega_t^j \right)$ denote the LP relaxation of the integer-program $P_{t}^{i}\left(x_{t-1}^{ik}, \psi_t^{i+1}, \omega_t^j \right)$. If the incumbent solution $\theta_{t-1}^{ik}$ also satisfies 
\begin{equation} \label{ineq:angulo}
    \theta_{t-1}^{ik} < \sum_{j = 1}^{N_t} q_{t}^{j} {Q}_{t}^{i,LP}\left(x_{t-1}^{ik}, \psi_t^{i+1}, \omega_t^j \right) \leq \sum_{j = 1}^{N_t} q_{j} \underline{Q}_{t}^{i}\left(x_{t-1}^{ik}, \psi_t^{i+1}, \omega_t^j \right),
\end{equation}
then the solution can be cut off by using the Benders' cut. Since the Benders' cut is much cheaper to compute, \citealt{angulo2016improving} propose only enforcing a Benders' cut if the first inequality in \eqref{ineq:angulo} holds. Otherwise, tight cuts such as integer L-shaped cuts or Lagrangian cuts are used. We describe the backward step of the SDDiP algorithm with alternating cut criterion in Algorithm \ref{algo:alt_cut}.

A key advantage of the alternating cut approach is that it can be used as a cut-generating method to further enhance some of the existing approaches (\citealt{gjelsvik1999algorithm, thome2013non, steeger2015strategic}, \citealt{rahmaniani2020benders, chen2022generating}). For instance, we can alternate between Benders' cuts and Lagrangian in the third phase of \citealt{rahmaniani2020benders}'s strategy. The solution times can be improved if Benders' cuts are used to cut off incumbent solutions rather than the computationally expensive Lagrangian cuts.

\begin{algorithm}[]
	\caption{Backward step of SDDiP with alternating cut criterion} 
    \label{algo:alt_cut}
	\begin{algorithmic}[1]
        \State \textbf{Input}: \multiline{Sampled scenarios: $\pr{\om_1^{k}, \ldots, \om_{T}^{k}}$, the solution from the forward step $\pr{x_t^{ik}}_{t, k}$  and approximation $\pr{\psi_t^{i}}_{t}$ of the expected cost-to-go function.}
        \For {$t = T, \ldots, 2$}
            \For{$k = 1, \ldots, M$}
                \For {$j = 1, \ldots, N_t$}
                        \State \multiline{%
                                        Solve a linear programming relaxation $R_t^{ij}$ of the updated problem $P_{t}^{i}(x_{t-1}^{ik}, \psi_t^{i+1}, \om_t^{j})$ and collect Benders'  coefficients $(v_t^{ij}, \pi_{t}^{ij})$.}
                \EndFor
                \If {$\theta_{t-1}^{ik} < \sum_{j = 1}^{N_t} q_{j} {Q}_{t}^{i,LP}\left(x_{t-1}^{ik}, \psi_t^{i+1}, \omega_t^j \right)$}
                    \State \multiline{Add Benders'  cut using the coefficients $\br{(v_t^{ij}, \pi_t^{ij})}_{j = 1, \ldots, N_t}$ to $\psi_{t-1}^i$ to get $\psi_{t-1}^{i+1}$.}
                \Else
                    \For {$j = 1, \ldots, N_t$}
                        \State \multiline{%
                                        Solve a `stronger' relaxation $R_t^{ij}$ of the updated problem $P_{t}^{i}(x_{t-1}^{ik}, \psi_t^{i+1}, \om_t^{j})$ and collect coefficients $(v_t^{ij}, \pi_{t}^{ij})$.}
                    \EndFor
                \State \multiline{Add optimality cut using the coefficients $\br{(v_t^{ij}, \pi_t^{ij})}_{j = 1, \ldots, N_t}$ to $\psi_{t-1}^i$ to get $\psi_{t-1}^{i+1}$.}
                \EndIf
            \EndFor
        \EndFor
	\end{algorithmic} 
\end{algorithm}

\section{Computational experiments} \label{sec:comp}

We present a computational study to investigate how the alternating cuts perform for multi-stage stochastic integer programming problems. The goal of the study is three-fold.  First, we aim to evaluate how decomposition methods for solving MSIPs compare against solving a large-scale extensive form. Second, we study the impact of the alternating cut process on the SDDiP algorithm. Specifically, we implement the SDDiP Algorithm \ref{algo:sddip} with the backward step given in Algorithm \ref{algo:alt_cut}, and the convergence criterion discussed in Section \ref{sec:cc}. We test the proposed algorithm on two classes of real-world problems. We consider the multi-stage version of the stochastic multi-knapsack problem (SMKP) in \citealt{angulo2016improving} and the generation expansion planning problem in \citealt{zou2019stochastic}. The structure of these problems is discussed in Appendix \ref{sec:app-problems}. We use the same data-generating procedure as in the original papers. Finally, we evaluate how different choices of parameters in the SDDiP algorithm, such as the number of paths sampled, pseudo-gap $\delta$ in the termination criterion, and tolerance probabilities $\alpha, \gamma$, impact the performance of the SDDiP algorithm. The alternating cut approach and Nested Benders' algorithm are implemented in the SDDP package (\citealt{dowson2021sddp}) in Julia. All experiments are conducted on an Intel Xeon Gold 5218R CPU using 2 cores and 6 GB memory per core. The code for the algorithm is available at \url{https://github.com/akulbansal5/sddp_ac/tree/master} and the datasets and problem formulations are available in the repository \url{https://github.com/akulbansal5/sddp_data/tree/master}.

\subsection{Stochastic multi-knapsack problem}

We begin by comparing the Nested Benders' algorithm against solving the extensive form of the MSIP problem. Specifically, we compare three approaches: classical Nested Benders' algorithm, Nested Benders' with the alternating cut procedure, and solving the extensive form as a large integer program. Because the gaps in the SDDiP algorithm are only estimates, testing the alternating cut procedure with the Nested Benders' algorithm allows us to compare the exact optimality gaps at termination with those in the extensive form.  We use SMKP for our test instances in this experiment. SMKP has binary variables which means that Nested Benders' algorithm converges to an optimal solution of the MSIP problem with tight cuts, such as Lagrangian cuts. We use a time limit of one hour and a gap threshold of 1\% for termination. We compare the performance with respect to the solution times and the relative gap percentage at termination. All reported metrics are obtained by averaging results across five instances from each instance class defined by the number of stages ($T$), number of knapsack constraints (rows), number of knapsack items (cols), and number of scenarios per node of the tree (scens).

In Figure \ref{fig:smkp_nbvsde}, we compare the Nested Benders' algorithm, which utilizes Integer L-shaped cuts (indicated by \textit{cut=I}), with the alternating cut procedure (indicated by \textit{bpass=A}) and the extensive form solved as a large integer program. The sub-figures on the left depict the solution times and those on the right depict the relative gap percentage. Each plot shows results for instances with a specific number of stages, $T$, indicated in the plot title. Further, each curve in the plot corresponds to instances with a specific number of scenarios per stage. This is indicated in the plot legend. $\text{Ben\_}v$ indicates the results for the decomposition algorithm when applied to instances with $v$ scenarios per stage. Similarly, $\text{DE\_}v$ indicates the results for the deterministic equivalent for instances with $v$ scenarios per stage. The tuples on the $x$-axis represent the number of knapsack constraints and number of knapsack items in each subproblem of the instance. We observe from Figure \ref{fig:smkp_nbvsde} that as the instance size increases in the number of stages or scenarios per stage, the decomposition method performs better than the deterministic equivalent on solution times and relative gap percentage. We also notice that solution times and gaps increase for instances with a larger number of knapsack constraints and a smaller number of knapsack items. 

In Figure \ref{fig:smkp_defvsalt}, we compare the classical Nested Benders' algorithm with the version that also alternates with Benders cuts. In the plot legends, $\text{Def}\_v$ and $\text{Alt}\_v$ denote the default approach and the alternating approach for instances with $v$ scenarios per stage. We observe that the default approach times out in all cases with termination gaps at more than 40\%. On the other hand, with the alternating approach, the algorithm terminates before the time limit in most cases and the gaps are closer to 1\%. These results show that the alternating cut generation method outperforms the default cut generation approach and the deterministic equivalent.

\begin{figure}[H]
\centering
\begin{minipage}{.5\textwidth}
  \centering
  \caption*{(a): T = 3}

  \vspace{-0.6cm}
  \includegraphics[scale = 0.55]{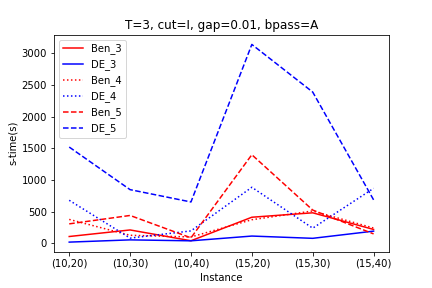}

\end{minipage}%
\begin{minipage}{.5\textwidth}
  \centering
  \caption*{(d) T = 3}
  \vspace{-0.6cm}
  \includegraphics[scale = 0.55]{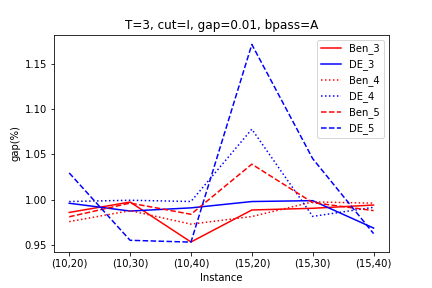}
\end{minipage}

\begin{minipage}{.5\textwidth}
  \centering
  \caption*{(b): T = 4}
  \vspace{-0.6cm}
  \includegraphics[scale = 0.55]{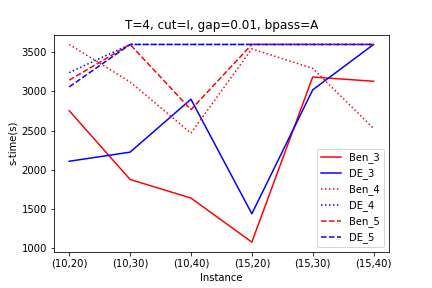}
\end{minipage}%
\begin{minipage}{.5\textwidth}
  \centering
  \caption*{(e): T = 4}
  \vspace{-0.6cm}
  \includegraphics[scale = 0.55]{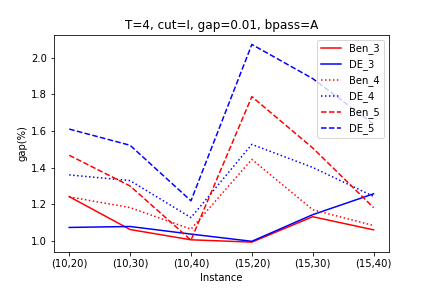}
\end{minipage}

\begin{minipage}{.5\textwidth}
  \centering
  \caption*{(c): T = 5}
  \vspace{-0.6cm}
  \includegraphics[scale = 0.55]{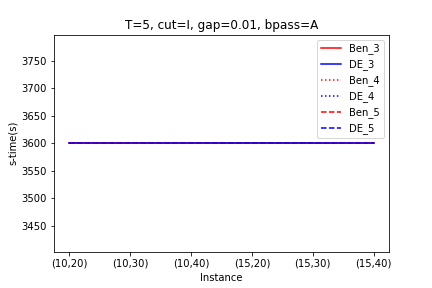}
\end{minipage}%
\begin{minipage}{.5\textwidth}
  \centering
  \caption*{(f): T = 5}
  \vspace{-0.6cm}
  \includegraphics[scale = 0.55]{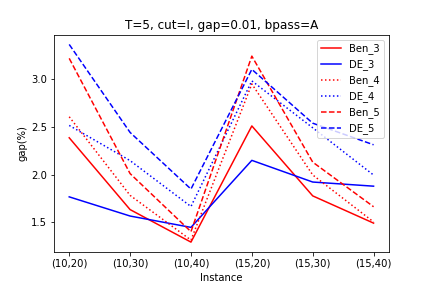}
\end{minipage}
\caption{Solution times (in seconds) and relative gap percent (\%) for the deterministic equivalent and the Nested Benders algorithm alternating between Benders' cuts and Integer L-shaped cuts. {In Figure (c) all instances hit the time limit.}}
\label{fig:smkp_nbvsde}

\end{figure}

\begin{figure}[H]

\centering
\begin{minipage}{.5\textwidth}
  \centering
  \caption*{(a): T = 3}
  \vspace{-0.6cm}
  \includegraphics[scale = 0.55]{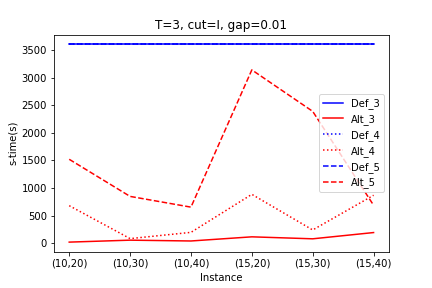}
\end{minipage}%
\begin{minipage}{.5\textwidth}
  \centering
  \caption*{(d): T = 3}
  \vspace{-0.6cm}
  \includegraphics[scale = 0.55]{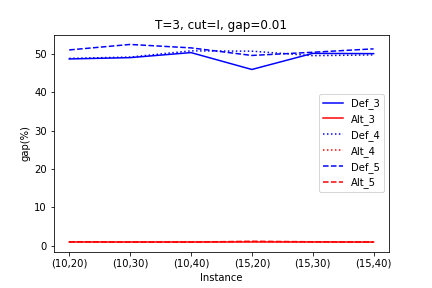}
\end{minipage}

\begin{minipage}{.5\textwidth}
  \centering
  \caption*{(b): T = 4}
  \vspace{-0.6cm}
  \includegraphics[scale = 0.55]{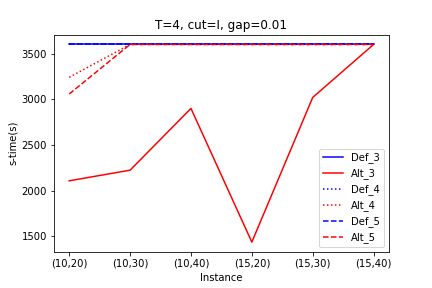}
\end{minipage}%
\begin{minipage}{.5\textwidth}
  \centering
  \caption*{(e): T = 4}
  \vspace{-0.6cm}
  \includegraphics[scale = 0.55]{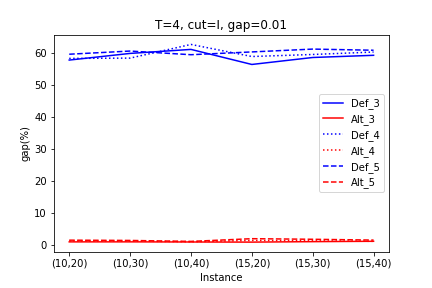}
\end{minipage}

\begin{minipage}{.5\textwidth}
  \centering
  \caption*{(c): T = 5}
  \vspace{-0.6cm}
  \includegraphics[scale = 0.55]{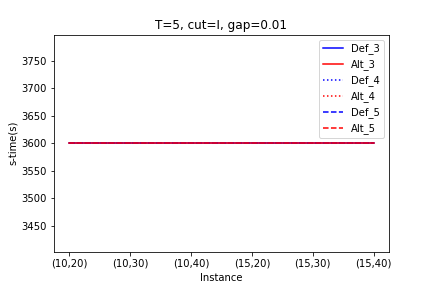}
\end{minipage}%
\begin{minipage}{.5\textwidth}
  \centering
  \caption*{(f): T = 5}
  \vspace{-0.6cm}
  \includegraphics[scale = 0.55]{DefaultAltOnT_gap_4_I_0.01.png}
\end{minipage}

\caption{Solution times (in seconds) and relative gap percent for the Nested Benders algorithm with the default and alternating cut approach. The alternating method utilizes Benders and Integer L-shaped cuts. {In Figure (c), all instances hit the time limit.}}
\label{fig:smkp_defvsalt}

\end{figure}

Next, we evaluate the impact of our proposed Algorithm \ref{algo:alt_cut} on the SDDiP method. We first report the results for SMKP instances. For termination of the SDDiP algorithm, we use the convergence criterion discussed in Section \ref{sec:cc} with pseudo-gap $\delta = 0.01$, and tolerance probabilities $\gamma = 0.10$ and $\alpha = 0.10$. In our study, we also experimented with different values of these parameters. Because the trend in the metrics we compare is similar for all parameter choices, we only report results for a particular choice of parameters. {To account for uncertainty in the data-generating process, we consider two instances from each instance class. Further, to account for uncertainty in the SDDiP algorithm, we consider two seeds in the random sampling of the scenario paths for each instance.} After termination, we sample 5\% of scenario paths independently, to compute a confidence interval on the upper bound. We then report the average value of four independent runs for each instance class in Tables \ref{tab:smkp_time}--\ref{tab:smkp_prop}.  The column \textbf{M} denotes the number of scenario paths sampled. {Some of the column names have the form \textbf{xy-z} where \textbf{x} denotes one of the tight cuts, that is, a Lagrangian cut obtained using a subgradient algorithm (L) or Integer L-shaped cut (I). The subgradient algorithm we use to solve the Lagrangian dual problem is a modified version of Broyden–Fletcher–Goldfarb–Shanno (BFGS) algorithm (\citealt{nocedal1999numerical}) implemented in the SDDP package (\citealt{dowson2021sddp}). The \textbf{y} in \textbf{xy-z} denotes the cut-generating procedure: either the default (D) method or the alternating cut (A) approach.} The extension \textbf{z} denotes the performance metric (time, iterations, proportion of tight cuts used, and gap estimate). We use notation \textbf{t} to denote time, \textbf{iter} to denote the iterations, \textbf{Aprop} to denote the proportion of tight cuts, and \textbf{gap(\%)} to denote the optimality gap estimate. The gap estimate is obtained by computing the relative gap between the lower bound on termination and the right end of the confidence interval on the upper bound.

We now discuss results for the SMKP instances in Tables \ref{tab:smkp_time}--\ref{tab:smkp_prop}.  From Table \ref{tab:smkp_time}, we see that the default approach always hits the time limit. Further, alternating with Lagrangian cuts tends to take more time compared to using integer L-shaped cuts. This is expected because deriving Lagrangian cuts using a subgradient algorithm is computationally expensive. This is further supported by the observation from Table \ref{tab:smkp_iter}, that alternating with integer L-shaped cuts allows more iterations in the same or less time than with Lagrangian cuts. Because the choice of the method for solving the Lagrangian dual problem plays a crucial role in determining the strength of the cut and computational effort required to derive it, we also experiment with \citealt{chen2022generating}'s method for deriving the Lagrangian cut. Our observations 
are discussed in Appendix \ref{sec:chen}.

Next, from Table \ref{tab:smkp_gaps}, we notice that the alternating cut approach performs better than the default approach across all instance classes and $M \in \{2, 5\}$ in terms of gaps. Also, the gaps attained from using Lagrangian cuts are comparable with those from Integer L-shaped cuts. The advantage of the alternating approach can also be seen in Table \ref{tab:smkp_prop}. On average, the tight cuts are fewer than 20\% of the total number of cuts. This allows us to accelerate the convergence of the algorithm by using computationally less-intensive Benders' cut the remaining 80\% of the time.

\subsection{Generation expansion planning}

Next, we discuss our computational experiments with the same GEP instances as in \citealt{zou2019stochastic}. The instance is based on real-life data taken from \citealt{jin2011modeling}. The precise formulation is given in Appendix \ref{sec:app-problems}. The problem has 10 decision stages with three equally likely possible scenarios in each stage. The total number of scenarios is $3^9 = 19,683$. We consider three scenario trees from the underlying distribution and solve each of them three times with different random seeds. Owing to the large scale of the instance, the build times for the extensive form are prohibitive and gaps with a one-hour time limit are more than 10\%. For different choices of parameters, we report the average value of the total nine independent runs for each parameter setting. The results are reported in Tables \ref{tab:gep-times}--\ref{tab:gep-gaps}. The columns $\delta$, $M$, and $\alpha, \gamma$ denote the choice of pseudo-gap, number of sampled paths, and tolerance probabilities in the stopping criterion, respectively. The names in other columns are the same as in the case of SMKP instances.

{In Table \ref{tab:gep-times}, we record the solution times (in seconds) for the SDDiP algorithm with different choices of cuts and cut-generating methods. We observe that the alternating cut method outperforms the default method in most cases. We see a noticeable improvement in solution times when alternating with Integer L-shaped cuts. It is well known that integer L-shaped cuts tend to be weak and the decomposition algorithm with the default approach struggles when we only utilize cuts from this family. Next, we observe that with Lagrangian cuts obtained using the subgradient algorithm, the performance of the default and alternating cut approach is comparable, although, the alternating method still outperforms the default method in most cases. From Table \ref{tab:gep-iter}, we observe that the alternating cut generating method takes more iterations to terminate than the default method in the case of Lagrangian cuts. Despite this, we see an overall improvement in the solution times in Table \ref{tab:gep-times}.
The gap estimates in Table \ref{tab:gep-gaps} are similar for the Lagrangian cuts with and without the alternating-cut generating approach. However, there is a significant improvement in the gap estimates when we alternate with Integer L-shaped cuts. Overall, the default cut-generating approach with Lagrangian cuts outperforms the other combinations, but the gaps with the alternating method are also comparable. Unsurprisingly, this indicates that the Lagrangian cuts obtained using the subgradient method are stronger than the Integer L-shaped cuts which belong to the same family. This is further confirmed in Table \ref{tab:gep-prop} by the proportion of tight cuts we add in the alternating cut method. The proportion of Lagrangian cuts used is less than the proportion of integer L-shaped cuts in all cases.}

Turning to the matter of parameter choice, we now study how different parameters impact the algorithm's performance. {From Tables \ref{tab:smkp_time}, \ref{tab:smkp_gaps}, \ref{tab:gep-times} and \ref{tab:gep-gaps}, we observe both in SMKP and GEP instances, that as the value of $M$ increases, the gaps improve marginally but at the cost of significant overhead in solution times. Next, we observe for GEP instances (Table \ref{tab:gep-times} and \ref{tab:gep-gaps}) that there is no significant difference in solution times or estimated gaps when tolerance probabilities are chosen to be 0.10 or 0.15.} The choice of the pseudo-gap, however, does have a noticeable impact with $\delta = 0.01$ yielding smaller gaps and higher solution times on average than $\delta = 0.05$. Overall, the results indicate that a properly tuned decomposition algorithm with the alternating cut procedure can provide comparable performance in less time than using the default cut-generating procedure.

\begin{table}[]
\centering

\caption{Solution times (in seconds) for SMKP instances and $M \in \{2,5\}$}
\label{tab:smkp_time}
\resizebox{0.65\textwidth}{!}{%
\begin{tabular}{c|cccc|cc|cc}
\textbf{\textbf{M}} & \textbf{\textbf{T}} & \textbf{\textbf{rows}} & \textbf{\textbf{cols}} & \textbf{\textbf{scens}} & \textbf{\textbf{LD-t}} & \textbf{\textbf{ID-t}} & \textbf{\textbf{LA-t}} & \textbf{\textbf{IA-t}} \\ \hline
2 & 3 & 10 & 30 & 3 & $>$3600 & $>$3600 & 97.73 & 37.37 \\
5 & 3 & 10 & 30 & 3 & $>$3600 & $>$3600 & 3184.44 & 2742.61 \\
2 & 3 & 10 & 30 & 5 & $>$3600 & $>$3600 & 625.24 & 42.13 \\
5 & 3 & 10 & 30 & 5 & $>$3600 & $>$3600 & 2760.35 & 1340.29 \\
2 & 4 & 10 & 30 & 3 & $>$3600 & $>$3600 & 336.98 & 56.41 \\
5 & 4 & 10 & 30 & 3 & $>$3600 & $>$3600 & $>$3600 & 2188.28 \\
2 & 4 & 10 & 30 & 10 & $>$3600 & $>$3600 & 1267.86 & 129.6 \\
5 & 4 & 10 & 30 & 10 & $>$3600 & $>$3600 & $>$3600 & 3197.54 \\
2 & 5 & 15 & 40 & 3 & $>$3600 & $>$3600 & 84.17 & 41.61 \\
5 & 5 & 15 & 40 & 3 & $>$3600 & $>$3600 & $>$3600 & $>$3600 \\
2 & 5 & 15 & 40 & 10 &$>$3600 & $>$3600 & 395.9 & 221.77 \\
5 & 5 & 15 & 40 & 10 & $>$3600 & $>$3600 & $>$3600 & $>$3600
\end{tabular}%
}
\end{table}

\begin{table}[]
\centering
\caption{Iterations to terminate for SMKP instances and $M \in \br{2,5}$}
\label{tab:smkp_iter}
\resizebox{0.65\textwidth}{!}{%
\begin{tabular}{c|cccc|cc|cc}
\textbf{\textbf{M}} & \textbf{\textbf{T}} & \textbf{\textbf{rows}} & \textbf{\textbf{cols}} & \textbf{\textbf{scens}} & \textbf{\textbf{LD-iter}} & \textbf{\textbf{ID-iter}} & \textbf{\textbf{LA-iter}} & \textbf{\textbf{IA-iter}} \\ \hline
2 & 3 & 10 & 30 & 3 & 31 & 323 & 13 & 13 \\
5 & 3 & 10 & 30 & 3 & 29 & 297 & 71 & 240 \\
2 & 3 & 10 & 30 & 5 & 25 & 287 & 24 & 24 \\
5 & 3 & 10 & 30 & 5 & 19 & 241 & 34 & 163 \\
2 & 4 & 10 & 30 & 3 & 20 & 268 & 23 & 23 \\
5 & 4 & 10 & 30 & 3 & 12 & 203 & 52 & 216 \\
2 & 4 & 10 & 30 & 10 & 10 & 224 & 47 & 53 \\
5 & 4 & 10 & 30 & 10 & 5 & 126 & 39 & 206 \\
2 & 5 & 15 & 40 & 3 & 14 & 187 & 10 & 10 \\
5 & 5 & 15 & 40 & 3 & 8 & 137 & 62 & 185 \\
2 & 5 & 15 & 40 & 10 & 5 & 128 & 37 & 37 \\
5 & 5 & 15 & 40 & 10 & 2 & 85 & 64 & 152
\end{tabular}%
}
\end{table}

\begin{table}[]
\centering
\caption{Gap estimates at termination for SMKP instances and two choices of M}
\label{tab:smkp_gaps}
\resizebox{0.80\textwidth}{!}{%
\begin{tabular}{c|cccc|cc|cc}
\textbf{\textbf{M}} & \textbf{\textbf{T}} & \textbf{\textbf{rows}} & \textbf{\textbf{cols}} & \textbf{\textbf{scens}} & \textbf{\textbf{LD-gap(\%)}} & \textbf{\textbf{ID-gap(\%)}} & \textbf{\textbf{LA-gap(\%)}} & \textbf{\textbf{IA-gap(\%)}} \\ \hline
2 & 3 & 10 & 30 & 3 & 52.795 & 52.292 & 3.357 & 3.358 \\
5 & 3 & 10 & 30 & 3 & 52.102 & 52.502 & 3.396 & 3.495 \\
2 & 3 & 10 & 30 & 5 & 54.755 & 54.022 & 4.004 & 4.069 \\
5 & 3 & 10 & 30 & 5 & 54.181 & 55.828 & 3.836 & 3.744 \\
2 & 4 & 10 & 30 & 3 & 61.806 & 62.19 & 4.877 & 4.877 \\
5 & 4 & 10 & 30 & 3 & 60.94 & 61.491 & 4.132 & 4.256 \\
2 & 4 & 10 & 30 & 10 & 60.549 & 65.523 & 3.204 & 3.068 \\
5 & 4 & 10 & 30 & 10 & 60.903 & 71.722 & 3.104 & 3.115 \\
2 & 5 & 15 & 40 & 3 & 68.076 & 68.011 & 3.411 & 3.411 \\
5 & 5 & 15 & 40 & 3 & 68.229 & 68.129 & 2.96 & 3.076 \\
2 & 5 & 15 & 40 & 10 & 68.338 & 68.131 & 2.397 & 2.384 \\
5 & 5 & 15 & 40 & 10 & 67.958 & 68.256 & 2.178 & 2.059
\end{tabular}%
}
\end{table}

\begin{table}[]
\centering
\caption{Proportion of tight cuts at termination for SMKP instances and $M \in \br{2,5}$}
\label{tab:smkp_prop}
    \resizebox{0.60\textwidth}{!}{%
\begin{tabular}{c|cccc|cc}
\textbf{\textbf{M}} & \textbf{\textbf{T}} & \textbf{\textbf{rows}} & \textbf{\textbf{cols}} & \textbf{\textbf{scens}} & \textbf{\textbf{LA-eprop}} & \textbf{\textbf{IA-eprop}}\\  \hline
2 & 3 & 10 & 30 & 3 & 0.123 & 0.123 \\
5 & 3 & 10 & 30 & 3 & 0.331 & 0.435 \\
2 & 3 & 10 & 30 & 5 & 0.157 & 0.157 \\
5 & 3 & 10 & 30 & 5 & 0.197 & 0.416 \\
2 & 4 & 10 & 30 & 3 & 0.135 & 0.135 \\
5 & 4 & 10 & 30 & 3 & 0.247 & 0.371 \\
2 & 4 & 10 & 30 & 10 & 0.066 & 0.081 \\
5 & 4 & 10 & 30 & 10 & 0.110 & 0.219 \\
2 & 5 & 15 & 40 & 3 & 0.015 & 0.015 \\
5 & 5 & 15 & 40 & 3 & 0.108 & 0.194 \\
2 & 5 & 15 & 40 & 10 & 0.004 & 0.004 \\
5 & 5 & 15 & 40 & 10 & 0.03 & 0.073
\end{tabular}%
}
\end{table}

\begin{table}[]
\centering
\caption{Solution times under different parameter choices for GEP instances}
\label{tab:gep-times}
\resizebox{0.60\textwidth}{!}{%
\begin{tabular}{ccc|cc|cc}
\textbf{\textbf{$\delta$}} & \textbf{\textbf{M}} & \textbf{\textbf{$\alpha$,  $\gamma$}} & \textbf{\textbf{LD-t}} & \textbf{\textbf{ID-t}} & \textbf{\textbf{LA-t}} & \textbf{\textbf{IA-t}} \\ \hline
0.01 & 2 & 0.15 & 64.28 & $>$3600 & 71.93 & 50.71 \\
0.01 & 5 & 0.15 & 3149.33 & $>$3600 & 2832.64 & $>$3600 \\
0.01 & 2 & 0.10 & 70.13 & $>$3600 & 66.79 & 57.25 \\
0.01 & 5 & 0.10 & $>$3600 & $>$3600 & 3436.96 & $>$3600 \\
0.05 & 2 & 0.15 & 32.40 & $>$3600 & 24.10 & 19.28 \\
0.05 & 5 & 0.15 & 63.15 & $>$3600 & 53.12 & 55.59 \\
0.05 & 2 & 0.10 & 32.06 & $>$3600 & 25.70 & 19.86 \\
0.05 & 5 & 0.10 & 73.09 & $>$3600 & 81.91 & 116.22
\end{tabular}%
}
\end{table}

\begin{table}[]
\centering
\caption{Iterations under different parameter choices for GEP instances}
\label{tab:gep-iter}
\resizebox{0.55\linewidth}{!}{%
\begin{tabular}{ccc|cc|cc}
\textbf{\textbf{$\delta$}} & \textbf{\textbf{M}} & \textbf{\textbf{$\alpha$,  $\gamma$}} & \textbf{\textbf{LD-iter}} & \textbf{\textbf{ID-iter}} & \textbf{\textbf{LA-iter}} & \textbf{\textbf{IA-iter}} \\ \hline
0.01 & 2 & 0.15 & 35 & 896 & 46 & 42 \\
0.01 & 5 & 0.15 & 908 & 409 & 756 & 298 \\
0.01 & 2 & 0.10 & 40 & 876 & 47 & 51 \\
0.01 & 5 & 0.10 & 1046 & 408 & 976 & 294 \\
0.05 & 2 & 0.15 & 9 & 870 & 10 & 10 \\
0.05 & 5 & 0.15 & 16 & 415 & 18 & 20 \\
0.05 & 2 & 0.10 & 10 & 870 & 9 & 12 \\
0.05 & 5 & 0.10 & 18 & 401 & 26 & 32
\end{tabular}%
}
\end{table}

\begin{table}[]
\centering
\caption{Gaps under different parameter choices for GEP instances}
\label{tab:gep-gaps}
\resizebox{0.75\textwidth}{!}{%
\begin{tabular}{ccc|cc|cc}
\textbf{\textbf{\textbf{$\delta$}}} & \textbf{\textbf{\textbf{M}}} & \textbf{\textbf{\textbf{$\alpha$,  $\gamma$}}} & \textbf{\textbf{\textbf{LD-gap(\%)}}} & \textbf{\textbf{ID-gap(\%)}} & \textbf{\textbf{\textbf{LA-gap(\%)}}} & \textbf{\textbf{IA-gap(\%)}} \\ \hline
0.01 & 2 & 0.15 & 0.865 & 84.857 & 1.091 & 2.595 \\
0.01 & 5 & 0.15 & 0.881 & 87.666 & 0.891 & 0.414 \\
0.01 & 2 & 0.10 & 0.868 & 85.386 & 0.992 & 2.329 \\
0.01 & 5 & 0.10 & 0.881 & 87.457 & 0.893 & 0.414 \\
0.05 & 2 & 0.15 & 1.170 & 84.827 & 1.737 & 2.573 \\
0.05 & 5 & 0.15 & 1.078 & 87.598 & 1.391 & 2.277 \\
0.05 & 2 & 0.10 & 1.081 & 84.659 & 1.770 & 2.599 \\
0.05 & 5 & 0.10 & 1.078 & 87.661 & 1.270 & 2.170
\end{tabular}%
}
\end{table}

\begin{table}[]
\centering
\caption{Proportion of tight cuts used under different parameter choices for GEP instances}
\label{tab:gep-prop}
\resizebox{0.5\textwidth}{!}{%
\begin{tabular}{ccc|cc}
\textbf{\textbf{$\delta$}} & \textbf{\textbf{M}} & \textbf{\textbf{$\alpha$,  $\gamma$}} & \textbf{\textbf{LA-eprop}} & \textbf{\textbf{IA-eprop}} \\ \hline
0.01 & 2 & 0.15 & 0.317 & 0.437 \\
0.01 & 5 & 0.15 & 0.356 & 0.661 \\
0.01 & 2 & 0.10 & 0.314 & 0.451 \\
0.01 & 5 & 0.10 & 0.364 & 0.661 \\
0.05 & 2 & 0.15 & 0.101 & 0.146 \\
0.05 & 5 & 0.15 & 0.209 & 0.382 \\
0.05 & 2 & 0.10 & 0.089 & 0.159 \\
0.05 & 5 & 0.10 & 0.241 & 0.483
\end{tabular}%
}
\end{table}

\section{Conclusions}\label{sec:concl}

We perform a computational study of cutting plane methods for solving multi-stage stochastic integer programming problems. We extend the alternating cut strategy from two-stage problems to the multi-stage setup. In our application of the proposed strategy, we consider alternating between any choice of valid and tight cut. We show that the Lagrangian dual problem may have multiple optimal solutions which lead to cuts of varying strengths. Further, we conduct a computational study with two classes of problems and report encouraging results that show the merit of the alternating procedure in multi-stage settings.

\section*{Acknowledgments} This work is supported, in part, by NSF \#2007814 and ONR \#N00014-22-1-2602.

\appendix

\section{Problem Classes}\label{sec:app-problems}

In this section, we provide a more detailed discussion of the problem classes we considered. Two-stage SMKP instances are generated in \citealt{angulo2016improving}. We extend the problem to multi-stage as follows. The stage $t$ subproblem with $j^{th}$ uncertainty realization as $w_t^j$ is given as follows:
\begin{align}
    Q_t(x_{t-1}, w_t^j) := & \min_{x_t, y_t} \hspace{0.2cm} (q(w_t^j))^{\top} x_t + \rho\mathbbm{1}^{\top} y_t + \sum_{k=1}^{N_{t+1}}q_k Q_{t+1}(x_t, w_{t+1}^k) \nonumber \\ 
    \text { s.t. } & W_{t-1} x_{t-1} + A_t x_t + I y_t \geq h_t, \label{cons:smkp_sub_cons}\\ 
    & x^{t} \in\{0,1\}^n, y_t \geq 0. \nonumber
\end{align}
Note that the uncertainty only affects the objective function through the coefficients $q(w_t^j)$. {This is the minimization version of the stochastic knapsack problem with constraint \eqref{cons:smkp_sub_cons} enforcing that items picked in stage $t$ must have at least value $h_t$. The incumbent solution $x_0$ in the first stage is also treated as a binary variable rather than a vector whose value is given as input.} The notation $I$ denotes the identity matrix and $\mathbbm{1}$ denotes a vector of all ones. The local variable vector $y_t$ captures the violation in one or more knapsack constraints. Each violation is penalized with cost $\rho$ in the objective. The elements for matrices $A_t$, $W_{t-1}$ and vector $q(w_t^j)$ are sampled uniformly from $\{1,..., 100\}$. The vector $h_t$ equals $\frac{3}{4} W_{t-1} \mathbbm{1} + \frac{3}{4} A_t \mathbbm{1}$. The scalar cost $\rho$ is taken to be 200.

Next, we discuss the generation expansion planning (GEP) problem (\citealt{zou2019stochastic, jin2011modeling}). There are $n$ types of generators that are available. Let $g_t$ and $x_t$ denote the decision vectors indicating the number of generators of each type built at stage $t$, and the cumulative number of generators of each type built until stage $t$, respectively. Each time period $t$ is further divided into sub-periods. Following \citealt{jin2011modeling}, we consider three sub-periods indicating low, medium, and high demand. Let $y_{ts}$ be the local variable vector denoting the electricity produced by each generator in time period $t$ and sub-period $s$. The demand in stage $t$ and sub-period $s$ is denoted by $d_{ts}$ while the unmet demand is given by $u_{ts}$. Now, the optimization problem at stage $t$ is given by:
\begin{subequations}
\begin{align}
    Q_t(x_{t-1}, w_t^j) := & \min_{g_t, y_{ts}} \hspace{0.2cm} a_t^{\top} g_t + \sum_{s} (q_s(w_t^j))^{\top} y_{ts}+ \rho\sum_{s} u_{ts} + \sum_{k=1}^{N_{t+1}}q_k Q_{t+1}(x_t, w_{t+1}^k)  \\ 
    &x_t \leq G \label{eq:gep_bound}\\ 
    &x_t = x_{t-1} + g_t \label{eq:gep_connect}\\
    &y_{ts} \leq A_t x_t, &s \in S \label{eq:gep_elec}\\
    &\mathbf{1}^{\top} y_{ts} + u_{ts} = d_{ts}(w_t^j), &s \in S \label{eq:gep_dem}\\
    &x_t, g_t \in \mathbb{Z}_{+}^{n}, y_t \in \mathbb{R}^{n}_{+}. \nonumber
\end{align}
\end{subequations}

The uncertainty appears in the objective function through the coefficient vector $q_s(w_t^j)$. It also appears in constraints because demand $d_{ts}$ is random. The constraint \eqref{eq:gep_bound} enforces that only a pre-determined number of generators $G$ can be used across all time periods. In constraint \eqref{eq:gep_connect}, the decision variables in the current stage connect with those in previous stages. The constraint \eqref{eq:gep_elec} limits the generation capacity based on the available number of generators. The matrix $A_t$ contains the maximum rating and maximum capacity information of generators. Finally, the constraint \eqref{eq:gep_dem} enforces that the demand is met. The objective function evaluates the cost of adding generators and generating electricity while penalizing unmet demand. The first summand captures the cost of building the generators through coefficients $a_t$, the second summand is the cost of generating the electricity, and the third summand is the penalty cost of not meeting the demand. The final summand is the expected future cost arising in future stages.

\section{Restricted separation of Lagrangian cuts} \label{sec:chen}

In their proposed approach, \citealt{chen2022generating} add cuts in a multi-cut setting where a separate cut is added to approximate future cost associated with each scenario instead of aggregating all cuts as in \eqref{cut:form}. Therefore, for two-stage problems, the master problem contains variables of the form $\theta_{\om^{j}}$ for $j \in \br{1, \ldots, N_2}$, where $N_2$ is the number of uncertainty realizations in the second stage. Now, for a given scenario $\om \in \br{\om^{1}, \ldots, \om^{N_2}}$ and given coefficients $(\lambda_{\om}, \lambda_{0, \om})$ belonging to a set $\Pi_{\omega}$, the cuts in \citealt{chen2022generating} are of the form 
\begin{align} \label{cut:chen_lagrn}
\lambda_{\om}^{\top}x + \lambda_{0,\om} \theta_{\om} \geq \mc{L}_{\om}(\lambda_{\om}, \lambda_{0, \om}), 
\end{align}
where $x$ is the state vector from the previous stage, and $\mc{L}_{\om}(\lambda_{\om}, \lambda_{0, \om})$ is the optimal objective value of the Lagrangian relaxation of the subproblem associated with scenario $\om$. In the Lagrangian relaxation, the non-anticipativity constraints are relaxed. For a suitable $\Pi_{\om}$ stated in \citealt[Proposition 1]{chen2022generating}, the cuts of the form \eqref{cut:chen_lagrn} are the same as Lagrangian cuts \eqref{cuts:lagn}.  The authors propose to work with $\Pi_{\omega}$ which is a span of the Benders' coefficients identified in the previous iterations. This choice of $\Pi_{\om}$ may not produce tight cuts, but the authors show that they help improve the bound at the root node. To achieve optimality, authors use Benders and Integer L-shaped cuts after adding Lagrangian cuts at the root node. 

{To test the performance of the restricted separation of Lagrangian cuts against the exact separation in alternating cut approach, we conduct an experiment with instances of stochastic server location problem (SSLP) from \citealt{ntaimo2005million}.} These instances are different from those considered in \citealt{chen2022generating} for which the LP relaxation of the subproblems have integer solutions up to high precision. This makes the exact evaluation of the subproblems and consequently, the separation problem computationally easier. We use the publicly available Python code provided in \cite{chen2022generating}. Further, for a fair comparison, we implement our alternating cut procedure in Python 3.9 with cuts added using lazy callbacks. We implement both the single-cut and multi-cut versions. Gurobi 11.0.2 is used as the optimization solver on an Apple M1 Pro machine with 32 GB memory. {Our python code is available in the Github repository \url{https://github.com/akulbansal5/twostage/tree/master} and \citealt{chen2022generating}'s code is available in the Github repository \url{https://github.com/rchen234/SIP_LAG}.}

The results are reported in Table \ref{tab:chen}. Some of the columns names are of the form \textbf{x}-\textbf{y}. Symbol $\textbf{x} \in \br{\text{single}, \text{multi}}$ denotes the results for single-cut and multi-cut versions, respectively. Symbol $y \in \{\text{benders},$ $\text{integer}, \text{time(s)} \}$, denotes the number of Benders' cuts, Integer L-shaped cuts, and solve time until termination with less than 0.01\% gap, respectively. The column names \textbf{C-LagTime(s)} and \textbf{C-time(s)} denote the solution times for adding the Lagrangian cuts and total time until convergence with less than 0.01\% gap using the algorithm in \citealt{chen2022generating}. {We consider different values for the hyper-parameters in the algorithm, such as gap tolerance and choice of normalization, and report the best results in Table \ref{tab:chen}.} We observe that both the multi-cut and single-cut versions of the alternating-cut approach with Integer L-shaped cuts outperform this method. The solve times with the single-cut version are noticeably better than those with the multi-cut version. Further, it is not immediately clear how to obtain a single-cut version from cuts of the form \eqref{cut:chen_lagrn}. In some cases, the obtained coefficients $\lambda_{0, \om} \approx 0$ in which case we can not aggregate the constraints as:   
\begin{align*}
    \theta \geq \sum_{\om} p_{\om} \frac{1}{\lambda_0,\om} \Big( \mc{L}_{\om}(\lambda_{\om}, \lambda_{0,\om}) - \lambda_{\om}^{\top}x \Big).
\end{align*}

Owing to these reasons, we do not implement the specialized Lagrangian cut-generation method in \citealt{chen2022generating} for the multi-stage setting.

\begin{table}[]
\centering
\caption{Comparison of alternating cut generation method implemented as single and multi-cut, with \citealt{chen2022generating}'s restricted separation of Lagrangian cuts.}
\label{tab:chen}
\resizebox{\textwidth}{!}{%
\begin{tabular}{c|ccc|ccc|cc}
\textbf{\textbf{Instance}} & \textbf{single-benders} & \textbf{single-integer} & \textbf{single-time (s)} & \textbf{\textbf{multi-benders}} & \textbf{\textbf{multi-integer}} & \textbf{\textbf{multi-time(s)}} & \textbf{\textbf{C-LagtTme(s)}} & \textbf{\textbf{C-time(s)}} \\ \hline
sslp\_5\_25\_50 & 21 & 3 & 1.18 & 1079 & 34 & 1.94 & 5.79 & 5.86 \\
sslp\_5\_25\_100 & 23 & 4 & 1.76 & 2151 & 36 & 1.74 & 13.10 & 13.27 \\
sslp\_10\_50\_50 & 219 & 5 & 13.49 & 6596 & 167 & 37.61 & 91.43 & 92.44 \\
sslp\_10\_50\_100 & 218 & 5 & 24.13 & 12854 & 162 & 53.28 & 171.20 & 173.20 \\
sslp\_10\_50\_500 & 233 & 14 & 152.07 & 121521 & 10938 & 670.82 & 776.80 & 839.06 \\
sslp\_10\_50\_1000 & 212 & 5 & 203.36 & 233328 & 20450 & 1281.95 & 1461.37 & 1423.55\\
sslp\_10\_50\_2000 & 201 & 4 & 374.11 & 448660 & 58687 & 2546.86 & 2777.85 & 2828.86 \\
sslp\_15\_45\_5 & 231 & 12 & 3.41 & 353 & 57 & 7.76 & 29.72 & 30.73 \\
sslp\_15\_45\_10 & 533 & 32 & 13.31 & 1660 & 278 & 36.59 & 48.64 & 52.21 \\
sslp\_15\_45\_15 & 588 & 46 & 27.19 & 832 & 135 & 25.06 & 86.01 & 86.94
\end{tabular}%
}
\end{table}

\bibliography{ref.bib}

\begin{thebibliography}{27}
\providecommand{\natexlab}[1]{#1}
\providecommand{\url}[1]{\texttt{#1}}
\expandafter\ifx\csname urlstyle\endcsname\relax
  \providecommand{\doi}[1]{doi: #1}\else
  \providecommand{\doi}{doi: \begingroup \urlstyle{rm}\Url}\fi

\bibitem[Angulo et~al.(2016)Angulo, Ahmed, and Dey]{angulo2016improving}
Gustavo Angulo, Shabbir Ahmed, and Santanu~S Dey.
\newblock Improving the integer {L}-shaped method.
\newblock \emph{INFORMS Journal on Computing}, 28\penalty0 (3):\penalty0 483--499, 2016.

\bibitem[Benders(1962)]{benders1962partitioning}
JF~Benders.
\newblock Partitioning procedures for solving mixed-variable programming problems, {N}umerische {M}atkematic 4.
\newblock \emph{SS8}, 1962.

\bibitem[Birge(1985)]{birge1985decomposition}
John~R Birge.
\newblock Decomposition and partitioning methods for multistage stochastic linear programs.
\newblock \emph{Operations research}, 33\penalty0 (5):\penalty0 989--1007, 1985.

\bibitem[Blair and Jeroslow(1982)]{blair1982value}
Charles~E Blair and Robert~G Jeroslow.
\newblock The value function of an integer program.
\newblock \emph{Mathematical programming}, 23\penalty0 (1):\penalty0 237--273, 1982.

\bibitem[Car{\o}e and Schultz(1999)]{caroe1999dual}
Claus~C Car{\o}e and R{\"u}diger Schultz.
\newblock Dual decomposition in stochastic integer programming.
\newblock \emph{Operations Research Letters}, 24\penalty0 (1-2):\penalty0 37--45, 1999.

\bibitem[Chen and Luedtke(2022)]{chen2022generating}
Rui Chen and James Luedtke.
\newblock On generating {L}agrangian cuts for two-stage stochastic integer programs.
\newblock \emph{INFORMS Journal on Computing}, 34\penalty0 (4):\penalty0 2332--2349, 2022.

\bibitem[Dowson and Kapelevich(2021)]{dowson2021sddp}
Oscar Dowson and Lea Kapelevich.
\newblock Sddp. jl: a {J}ulia package for stochastic dual dynamic programming.
\newblock \emph{INFORMS Journal on Computing}, 33\penalty0 (1):\penalty0 27--33, 2021.

\bibitem[Gade et~al.(2014)Gade, K{\"u}{\c{c}}{\"u}kyavuz, and Sen]{gade2014decomposition}
Dinakar Gade, Simge K{\"u}{\c{c}}{\"u}kyavuz, and Suvrajeet Sen.
\newblock Decomposition algorithms with parametric {G}omory cuts for two-stage stochastic integer programs.
\newblock \emph{Mathematical Programming}, 144\penalty0 (1):\penalty0 39--64, 2014.

\bibitem[Gjelsvik et~al.(1999)Gjelsvik, Belsnes, and Haugstad]{gjelsvik1999algorithm}
Anders Gjelsvik, Michael~M Belsnes, and Arne Haugstad.
\newblock An algorithm for stochastic medium-term hydrothermal scheduling under spot price uncertainty.
\newblock In \emph{Proceedings of 13th Power Systems Computation Conference}, 1999.

\bibitem[Homem-de Mello et~al.(2011)Homem-de Mello, De~Matos, and Finardi]{homem2011sampling}
Tito Homem-de Mello, Vitor~L De~Matos, and Erlon~C Finardi.
\newblock Sampling strategies and stopping criteria for stochastic dual dynamic programming: a case study in long-term hydrothermal scheduling.
\newblock \emph{Energy Systems}, 2\penalty0 (1):\penalty0 1--31, 2011.

\bibitem[Jin et~al.(2011)Jin, Ryan, Watson, and Woodruff]{jin2011modeling}
Shan Jin, Sarah~M Ryan, Jean-Paul Watson, and David~L Woodruff.
\newblock Modeling and solving a large-scale generation expansion planning problem under uncertainty.
\newblock \emph{Energy Systems}, 2:\penalty0 209--242, 2011.

\bibitem[Küçükyavuz and Sen(2017)]{KS2017}
Simge Küçükyavuz and Suvrajeet Sen.
\newblock An introduction to two-stage stochastic mixed-integer programming.
\newblock In Rajan Batta and Jiming Peng, editors, \emph{{TutORials} in Operations Research: Leading Developments from {INFORMS} Communities}, chapter~1, pages 1--27. INFORMS, 2017.

\bibitem[Laporte and Louveaux(1993)]{laporte1993integer}
Gilbert Laporte and Fran{\c{c}}ois~V Louveaux.
\newblock The integer {L}-shaped method for stochastic integer programs with complete recourse.
\newblock \emph{Operations Research Letters}, 13\penalty0 (3):\penalty0 133--142, 1993.

\bibitem[Nocedal and Wright(1999)]{nocedal1999numerical}
Jorge Nocedal and Stephen~J Wright.
\newblock \emph{Numerical optimization}.
\newblock Springer, 1999.

\bibitem[Ntaimo(2013)]{ntaimo2013fenchel}
Lewis Ntaimo.
\newblock {F}enchel decomposition for stochastic mixed-integer programming.
\newblock \emph{Journal of Global Optimization}, 55:\penalty0 141--163, 2013.

\bibitem[Ntaimo and Sen(2005)]{ntaimo2005million}
Lewis Ntaimo and Suvrajeet Sen.
\newblock The million-variable “march” for stochastic combinatorial optimization.
\newblock \emph{Journal of Global Optimization}, 32:\penalty0 385--400, 2005.

\bibitem[Pereira and Pinto(1991)]{pereira1991multi}
Mario~VF Pereira and Leontina~MVG Pinto.
\newblock Multi-stage stochastic optimization applied to energy planning.
\newblock \emph{Mathematical programming}, 52:\penalty0 359--375, 1991.

\bibitem[Philpott et~al.(2020)Philpott, Wahid, and Bonnans]{philpott2020midas}
Andrew~B Philpott, Faisal Wahid, and J~Fr{\'e}d{\'e}ric Bonnans.
\newblock Midas: A mixed integer dynamic approximation scheme.
\newblock \emph{Mathematical Programming}, 181\penalty0 (1):\penalty0 19--50, 2020.

\bibitem[Qi and Sen(2017)]{qi2017ancestral}
Yunwei Qi and Suvrajeet Sen.
\newblock The ancestral {B}enders’ cutting plane algorithm with multi-term disjunctions for mixed-integer recourse decisions in stochastic programming.
\newblock \emph{Mathematical Programming}, 161:\penalty0 193--235, 2017.

\bibitem[Rahmaniani et~al.(2020)Rahmaniani, Ahmed, Crainic, Gendreau, and Rei]{rahmaniani2020benders}
Ragheb Rahmaniani, Shabbir Ahmed, Teodor~Gabriel Crainic, Michel Gendreau, and Walter Rei.
\newblock The {B}enders dual decomposition method.
\newblock \emph{Operations Research}, 68\penalty0 (3):\penalty0 878--895, 2020.

\bibitem[Sen and Higle(2005)]{sen2005c}
Suvrajeet Sen and Julia~L Higle.
\newblock The {$C^3$} theorem and a {$D^2$} algorithm for large scale stochastic mixed-integer programming: Set convexification.
\newblock \emph{Mathematical Programming}, 104:\penalty0 1--20, 2005.

\bibitem[Sen and Sherali(2006)]{sen2006decomposition}
Suvrajeet Sen and Hanif~D Sherali.
\newblock Decomposition with branch-and-cut approaches for two-stage stochastic mixed-integer programming.
\newblock \emph{Mathematical Programming}, 106:\penalty0 203--223, 2006.

\bibitem[Steeger and Rebennack(2015)]{steeger2015strategic}
Gregory Steeger and Steffen Rebennack.
\newblock Strategic bidding for multiple price-maker hydroelectric producers.
\newblock \emph{IIE Transactions}, 47\penalty0 (9):\penalty0 1013--1031, 2015.

\bibitem[Thome et~al.(2013)Thome, Pereira, Granville, and Fampa]{thome2013non}
F~Thome, M~Pereira, S~Granville, and MHCF Fampa.
\newblock Non-convexities representation on hydrothermal operation planning using {SDDP}.
\newblock \emph{URL: www. psr-inc. com, submitted}, 2013.

\bibitem[van~der Laan and Romeijnders(2020)]{van2020converging}
Niels van~der Laan and Ward Romeijnders.
\newblock A converging {B}enders’ decomposition algorithm for two-stage mixed-integer recourse models.
\newblock \emph{University of Groningen, SOM Research School}, 2020.

\bibitem[Zhang and K\"u\c{c}\"ukyavuz(2014)]{zhang2014finitely}
Minjiao Zhang and Simge K\"u\c{c}\"ukyavuz.
\newblock Finitely convergent decomposition algorithms for two-stage stochastic pure integer programs.
\newblock \emph{SIAM Journal on Optimization}, 24\penalty0 (4):\penalty0 1933--1951, 2014.

\bibitem[Zou et~al.(2019)Zou, Ahmed, and Sun]{zou2019stochastic}
Jikai Zou, Shabbir Ahmed, and Xu~Andy Sun.
\newblock Stochastic dual dynamic integer programming.
\newblock \emph{Mathematical Programming}, 175\penalty0 (1):\penalty0 461--502, 2019.

\end{thebibliography}

\end{document}